\definecolor{green}{rgb}{0,.8,0} 
\newtheorem{theorem}{Theorem}[section] 
\newtheorem{lemma}[theorem]{Lemma}
\newtheorem{corollary}[theorem]{Corollary}
\newtheorem{proposition}[theorem]{Proposition}
\theoremstyle{definition}
\newtheorem*{defn}{Definition}
\newtheorem{example}[theorem]{Example}
\tikzstyle{vertex}=[circle, draw, inner sep=0pt, minimum size=6pt]
\tikzstyle{rvertex}=[circle, red, fill, draw=black, inner sep=0pt, minimum size=6pt]
\tikzstyle{gvertex}=[circle, green, fill, draw=black, inner sep=0pt, minimum size=6pt]
\tikzstyle{bvertex}=[circle, blue, fill, draw=black, inner sep=0pt, minimum size=6pt]
\tikzstyle{Bvertex}=[circle, black, fill, draw=black, inner sep=0pt, minimum size=4pt]
\tikzstyle{pvertex}=[circle, purp, fill, draw=black, inner sep=0pt, minimum size=6pt]
\tikzstyle{overtex}=[circle, orange, fill, draw=black, inner sep=0pt, minimum size=6pt]
\newcommand{\Bvertex}{\node[Bvertex]}
\newcommand{\midarrow}{\tikz \draw[- triangle 90] (0,0) -- +(.1,0);}
\tikzset{arrow data/.style 2 args={%
      decoration={%
         markings,
         mark=at position #1 with \arrow{#2}},
         postaction=decorate}
      }
\title{Playing Games with Cacti}
\author{Samuel Adefiyiju\thanks{Raytheon Missiles and Defense, Portsmouth, RI 02871, USA (samueladefiyiju@gmail.com).} \and 
Heather Baranek\thanks{Department of Mathematics, University of Wisconsin-Eau Claire, Eau Claire, WI 54701, USA (baraneha5997@uwec.edu).} \and 
Abigail Daly\thanks{Department of Mathematics and Computer Science, Providence College, Providence, RI 02918, USA (adaly3@friars.providence.edu).} \and 
Xadia M. Goncalves \thanks{Department of Mathematics and Computer Science, Providence College, Providence, RI 02918, USA (xrodrigu@friars.providence.edu).} \and 
Mary Leah Karker \thanks{Department of Mathematics and Computer Science, Providence College, Providence, RI 02918, USA (mkarker@providence.edu).} \and 
Alison LaBarre \thanks{Newgrange Design, 607 North Ave Door 17, Wakefield, MA 01880, USA (aklabarre@outlook.com).} \and 
Shanise Walker \thanks{Department of Mathematics, University of Wisconsin-Eau Claire, Eau Claire, WI 54701, USA (walkersg@uwec.edu).}}
\date{}
\begin{document}
\maketitle

\begin{abstract}
The Game of Cycles is a two-player impartial mathematical game, introduced by Francis Su in his book \emph{Mathematics for Human Flourishing} (2020). The game is played on simple planar graphs in which players take turns marking edges using a sink-source rule. In Alvarado et al.,  the authors determine who is able to win on graphs with certain types of symmetry using a mirror-reverse strategy. In this paper, we analyze the game for specific types of cactus graphs using a modified version of the mirror-reverse strategy.
\end{abstract}
{\bf Keywords} combinatorial games, games on graphs, game of cycles, cactus graphs

\noindent{\bf AMS subject classification} {Primary 91A46; Secondary 05C57}

\section{Introduction}
The field of combinatorial game theory is a fertile area of mathematical study and graphs provide some of the most interesting playing grounds on which to explore these games. Well known games, such as Col and Snort~\cite{Conway}, originally contrived as map-coloring games, can be played and studied on game boards consisting of planar graphs. The Game of Cycles is an impartial combinatorial game introduced by Francis Su in his  book \emph{Mathematics for Human Flourishing}~\cite{Su}. In this game two players take turns marking edges on a planar graph. When playing a move each player is aware of every move that has been played up to that point in the game and there is no element of chance involved. Such a game is known as a sequential game with perfect information. This game is considered impartial because the moves available at any point in the game depend only on the current configuration of the board and not on the player whose turn it is to move. For more information on combinatorial games, see~\cite{Siegel}.

The Game of Cycles is played on any simple planar graph of vertices and edges like in Figure~\ref{fig-cell} below.

 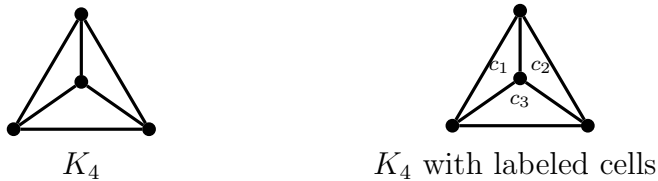
\begin{figure}[ht]\center
 \[
 \begin{array}{cc}	
\begin{tikzpicture}[scale=.9]
\begin{scope}[very thick, every node/.style={sloped,allow upside down}]
\Bvertex (A) at (0,0){};
\Bvertex (B) at (0,1) {};
\Bvertex (C) at (-1,-0.7) {};
\Bvertex (D) at (1,-0.7){};
  \draw (A) to(D);
  \draw (D)to (C);
  \draw (C)to  (A);
  \draw (B)to  (C);
  \draw (B) to(A);
  \draw (D)to (B);
\end{scope}
\end{tikzpicture}
\hspace{1in}&
\begin{tikzpicture}[scale=.9]
\begin{scope}[very thick, every node/.style={sloped,allow upside down}]
\Bvertex (A) at (0,0){};
\Bvertex (B) at (0,1) {};
\Bvertex (C) at (-1,-0.7) {};
\Bvertex (D) at (1,-0.7){};
\node(E) at (-0.31,-0.2)[label={[label distance=-2pt]: \scriptsize $c_1$}]{};
\node(F) at (0.31,-0.2)[label={[label distance=-2pt]: \scriptsize $c_2$}]{};
\node(G) at (0,-0.7)[label={[label distance=-2pt]: \scriptsize $c_3$}]{};
  \draw (A) to(D);
  \draw (D)to (C);
  \draw (C)to  (A);
  \draw (B)to  (C);
  \draw (B) to(A);
  \draw (D)to (B);
\end{scope}
\end{tikzpicture}\\
\hspace{-1in}K_4& K_4 \text{ with labeled cells }
\end{array}
\]
\caption{The complete graph $K_4$ game board and cells $c_1$, $c_2$, and $c_3$.}\label{fig-cell}
\end{figure}

We utilize the terminology defined originally by Alvarado et al.~\cite{Alvarado} to introduce the Game of Cycles. A planar graph of vertices and edges divides the plane into regions, which we call \emph{cells}, labelled $c_1$, $c_2$, and $c_3$  in  Figure~\ref{fig-cell}. The graph together with its bounded cells is the \emph{game board}.  Suppose there are two players and each player takes turns marking an edge that is not currently marked with an arrow, subject to the sink-source rule: players are not allowed to create a sink or a source, where a \emph{sink} is a vertex whose edges all point towards it and a \emph{source} is a vertex whose edges all point away from it. An example of a sink and a source is shown in Figure~\ref{fig-cells}. Therefore, every vertex must have at least one edge pointing towards it and at least one edge pointing away from it.  The object of the game is to be the first player to create a \emph{cycle cell}, which is a single cell in the game board whose whose boundary edges are all marked in either a clockwise or a counterclockwise direction. Figure~\ref{fig-cells} gives an example of a cycle cell. Creating a cycle cell is not always possible, so the first player to create a cycle cell or make the last possible move wins the game.

 \begin{figure}[ht]\center
\[
\begin{array}{ccc}	   
 \begin{tikzpicture}[scale=.9]
\begin{scope}[very thick, every node/.style={sloped,allow upside down}]
\Bvertex (A) at (0,0){};
\Bvertex (B) at (0,1) {};
\Bvertex (C) at (-1,-0.7) {};
\Bvertex (D) at (1,-0.7) {};
  \draw (A)-- node {\midarrow} (B);
  \draw (A)-- node {\midarrow} (C);
  \draw (A)-- node {\midarrow} (D);
\end{scope}
\end{tikzpicture}  
&
  \begin{tikzpicture}[scale=.9]
\begin{scope}[very thick, every node/.style={sloped,allow upside down}]
\Bvertex (A) at (0,0) {};
\Bvertex (B) at (0,1) {};
\Bvertex (C) at (-1,-0.7) {};
\Bvertex (D) at (1,-0.7) {};
  \draw (B)-- node {\midarrow} (A);
  \draw (C)-- node {\midarrow} (A);
  \draw (D)-- node {\midarrow} (A);
\end{scope}
\end{tikzpicture}  
&
\begin{tikzpicture}[scale=.9]
\begin{scope}[very thick, every node/.style={sloped,allow upside down}]
\Bvertex (A) at (0,0){};
\Bvertex (B) at (0,1) {};
\Bvertex (C) at (-1,-0.7) {};
\Bvertex (D) at (1,-0.7){};
  \draw (A)-- node {\midarrow} (D);
  \draw (D)-- node {\midarrow} (C);
  \draw (C)-- node {\midarrow} (A);
  \draw (B)to  (C);
  \draw (B) to(A);
  \draw (D)to (B);
\end{scope}
\end{tikzpicture}  \\
\text{ Source} &\text{ Sink} & \text{ Cycle cell}\\

\end{array}
\]
\caption{Examples of a source, sink, and cycle cell}\label{fig-cells}
\end{figure}
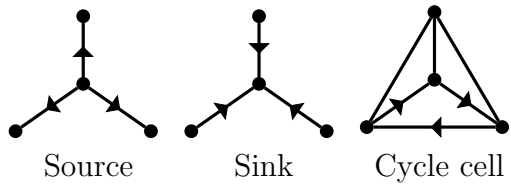
 
When an edge is marked, it can lead to consequences for other edges on the game board. A \emph{death move} occurs when an edge is marked with an arrow in such a way that it forms the penultimate arrow of a potential cycle cell. An edge is \emph{currently unplayable} if both possible markings of the edge lead to a either a sink/source or a death move. We say an edge is \emph{currently playable} if it is not currently unplayable.

There are also consequences for the sink-source rule. In particular, at degree 2 vertices the direction of the edge cannot change since changing directions will lead to a sink or source. A vertex in which all but one of its incident edges are pointed towards it is called an \emph{almost-sink}.  Thus, the last edge, if marked, must be marked with an arrow pointing away from it. Similarly, an \emph{almost-source} is a vertex in which all but one of its incident edges are pointing away from it. An edge is \emph{markable} if it can be marked with an arrow without violating the sink-source rule. However, if an edge is incident to two almost-sinks (or almost-sources), then the edge is \emph{unmarkable}. 

A \emph{strategy} is a sequence of moves for a player to make in any situation. A \emph{winning strategy} is a strategy which will force that player to win no matter how the other player plays. As a finite two-player sequential game with perfect information that is unable to end in a draw, Zermelo’s Theorem~\cite{Zermelo} tells us that for each game board one of the two players must possess a winning strategy. We say a player \emph{wins}  on a certain game board if the outcome of the game is not defined by a strategy and game play is deterministic. In studying the game of cycles, Alvarado et al.~\cite{Alvarado} explored winning strategies for various game boards and determined which player had a winning strategy or, in some cases, a predetermined win.

\section{Preliminary results}
In this section we summarize some results from Alvarado et al.~\cite{Alvarado} that are referenced in the proofs of our results. We begin with results for the cycle board on $n$ vertices. A cycle board on $n$ vertices, denoted $C_n$, is the cycle graph
with $n$ vertices and $n$ edges alternating along the boundary of the cell. 

\begin{theorem}{\rm \cite{Alvarado}} \label{cyclen}
Play on a $C_n$ is entirely determined by parity. For $n$ odd,  Player~1 wins and for $n$ even Player~2 wins.
\end{theorem}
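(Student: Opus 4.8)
The plan is to handle the two parities separately, in each case equipping the winning player with an explicit pairing (``mirror-reverse'') strategy, in the spirit of the arguments of Alvarado et al.

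For $n$ even, I would let Player~2 use the point reflection of $C_n$ (rotation by $\pi$), which carries each edge $e_i$ to the antipodal edge $e_{i+n/2}$; since $n \ge 4$ these two edges are vertex-disjoint. Whenever Player~1 marks an edge $e$ in some direction, Player~2 replies by marking the antipodal edge in the direction obtained from $e$'s by the reflection \emph{and then reversing the arrow}; write $\tau$ for the resulting symmetry on marked boards. The key steps are: (i) $\tau$ sends legal boards to legal boards (it interchanges sinks and sources), and because $e$ and $\tau(e)$ share no vertex the two freshly placed arrows cannot interact, so Player~2's reply never creates a sink or source; (ii) the antipodal edge is still unmarked whenever it is Player~2's turn, because the board just before Player~1 moved is $\tau$-symmetric; (iii) a cycle cell (all arrows clockwise, or all counterclockwise) is \emph{not} $\tau$-symmetric, since $\tau$ reverses the global orientation, so Player~2 never completes a cycle cell, and Player~1 cannot either, as completing one from a $\tau$-symmetric board would require a single move to fill the two distinct edges $e$ and $\tau(e)$. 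Hence no cycle cell is ever formed; Player~2 always has a reply, so the game ends on Player~1's turn and Player~2 makes the last move, winning.

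For $n$ odd I would let Player~1 open by marking, in either direction, the unique edge $e^\ast$ whose perpendicular bisector is an axis of symmetry of $C_n$ (that axis meets $C_n$ in $e^\ast$ and in the opposite vertex $v^\ast$). Thereafter Player~1 mirrors using the reflection $\rho$ across that axis, \emph{composed with a global arrow reversal}; call this composite symmetry $\tau$ again. The reversal is what makes $\tau$ fix $e^\ast$ together with its chosen orientation, so the board right after Player~1's opening move is $\tau$-symmetric; it is also what makes the two edges at $v^\ast$ compatible (one pointing in, one pointing out) once both are filled, so Player~1's mirrored reply is legal even at $v^\ast$, while for every other pair of mirrored edges the edges are disjoint and non-interacting as before. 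One then checks, exactly as in the even case, that Player~1 can always reply (the mirror edge is distinct from $e^\ast$ and still unmarked) and that Player~2 can never complete a cycle cell: were Player~2's move to fill the last open edge $f$, then $\rho(f) \ne f$ (distinct since $f \ne e^\ast$) would also have been open in the preceding $\tau$-symmetric board, a contradiction. Player~1 therefore wins: either some reply of Player~1 completes a cycle cell (which is legal and ends the game in Player~1's favor), or no cycle cell ever appears and, since Player~1 can always reply, the game ends on Player~2's turn with Player~1 having made the last move.

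The bulk of the write-up is routine bookkeeping that each mirrored reply is a legal move. The genuinely delicate point, and the step I expect to be the main obstacle, is the odd case near the fixed vertex $v^\ast$ and the fixed edge $e^\ast$, where two mirrored edges do meet: one must verify that the built-in arrow reversal is exactly what prevents $v^\ast$ from becoming a sink or source, and that opening on $e^\ast$ is consistent with $\tau$-symmetry. A quick by-hand verification on $C_3$ and $C_4$ is also worthwhile as a sanity check.
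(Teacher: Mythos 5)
Your two mirroring strategies are essentially sound as far as they go: the even case is an instance of the $180^{\circ}$-rotation argument (Corollary~\ref{rotsym}) and the odd case an instance of the single-self-involutive-edge reflection argument (Corollary~\ref{mirrorsymbasic}), and your handling of the delicate points (the fixed vertex $v^\ast$, the orientation-reversal bookkeeping, and the parity of unmarked edges blocking a cycle-cell completion by the opponent) is correct. But this establishes only that the named player \emph{has a winning strategy}, which is strictly weaker than what the theorem asserts. The statement is that play on $C_n$ is \emph{entirely determined} by parity, and the paper's terminology is deliberate here: it distinguishes a player ``winning'' (the outcome is deterministic, independent of how either player plays) from a player ``having a winning strategy.'' Your argument says nothing about what happens when the winning player deviates from the mirror strategy, so it does not prove the first sentence of the theorem, nor the second sentence under the paper's definition of ``wins.''

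The cited proof in Alvarado et al.\ is a counting argument with no strategy at all. One observes that the only bounded cell of $C_n$ has all $n$ edges on its boundary, so a cycle cell can be completed only by the player who makes the $n$th move; and if no cycle cell is ever completed, the game ends when every unmarked edge is unmarkable, at which point the quoted lemma (the number of unmarkable edges on a stuck $C_n$ board is even) shows that the number of edges actually marked is congruent to $n$ modulo $2$. Either way the total number of moves has the parity of $n$, so the last mover --- and hence the winner --- is forced regardless of the choices made by either player. If you want to prove the theorem as stated, you need this kind of argument (or some other proof that every maximal line of play has the same length parity); the mirror-reverse strategies should be kept, if at all, only as a consistency check.
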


The following lemma  was used to prove Theorem~\ref{cyclen}. 
\begin{lemma}{\rm \cite{Alvarado}} 
If a $C_n$ board has no markable edges, the number of unmarkable edges must be even. 
\end{lemma}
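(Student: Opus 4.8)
The strategy is to first describe the combinatorial structure of \emph{every} valid board on $C_n$ that has no markable edge, and then read off the parity statement from that structure. Fix an orientation of the cycle and call it \emph{clockwise}. Decompose $C_n$ into maximal runs of marked edges (\emph{marked arcs}) and maximal runs of unmarked edges (\emph{unmarked arcs}); these necessarily alternate around the cycle, so there are equally many of each, say $k$. If $k=0$ then every edge is marked, and since no vertex is a sink or source, following the arrows shows the position is a single directed $n$-cycle; then there are $0$ unmarkable edges, and $0$ is even. So assume $k\ge 1$.

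Two structural observations do the work. First, along a marked arc every interior vertex has both incident edges marked, and being neither a sink nor a source it has exactly one arrow in and one arrow out; hence the arrows along a marked arc are coherently oriented, and each marked arc is either clockwise or counterclockwise. Second, every unmarked arc is a single edge: if an unmarked arc had length $\ge 2$, let $f_1=w_0w_1$ be one of its end edges, so that $w_1$ is an interior vertex of the arc with both its edges unmarked, hence neither an almost-sink nor an almost-source. Marking $f_1$ toward $w_1$ creates no sink at $w_1$ (its other edge is still unmarked) and a source at $w_0$ only if $w_0$ is an almost-source; marking $f_1$ toward $w_0$ creates no source at $w_1$ and a sink at $w_0$ only if $w_0$ is an almost-sink. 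Since $w_0$ cannot be both, one of the two markings is legal, so $f_1$ is markable — contradicting the hypothesis. Thus the board is a cyclic sequence of $k$ marked arcs separated by $k$ single unmarked edges, and since on $C_n$ an unmarked edge that cannot be legally marked is precisely one incident to two almost-sinks or to two almost-sources, the number of unmarkable edges equals $k$. It remains to show $k$ is even.

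For this, note that (reading clockwise) a clockwise marked arc starts at a vertex whose outgoing first arrow makes it an almost-source and ends at a vertex whose incoming last arrow makes it an almost-sink, and a counterclockwise arc is the mirror image; the two end-vertices of an arc are almost-sinks/almost-sources because each has exactly one marked and one unmarked incident edge. Consider the single unmarked edge $e$ between consecutive marked arcs $M$ and $M'$, with $M$ preceding $M'$ clockwise: $e$ joins the clockwise end of $M$ to the counterclockwise end of $M'$. Checking the four orientation combinations, $e$ is incident to two almost-sinks exactly when $M$ is clockwise and $M'$ counterclockwise, to two almost-sources exactly when $M$ is counterclockwise and $M'$ clockwise, and to one of each (hence markable) otherwise. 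Since $e$ must be unmarkable, $M$ and $M'$ have opposite orientations. Therefore the orientations alternate clockwise/counterclockwise around the cyclic list $M_1,\dots,M_k$, which forces $k$ to be even, and the number of unmarkable edges is even.

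The main obstacle is the bookkeeping in the last two steps: one must correctly convert ``almost-sink/almost-source at the end of a marked arc'' into a precise statement about which of the two possible markings of the adjacent unmarked edge creates a sink or a source, and one must verify the equivalence on $C_n$ between ``cannot be legally marked'' and ``incident to two almost-sinks or two almost-sources'' so that no other local configuration of an unmarked edge can arise. Everything else is routine once the alternating arc decomposition is in place.
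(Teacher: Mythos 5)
Your proof is correct and complete. The paper does not actually reprove this lemma (it is quoted from Alvarado et al.\ without proof), and your argument is essentially the standard one behind it: decompose the cycle into coherently oriented marked arcs separated by single unmarkable edges, observe that each unmarkable edge is flanked either by two almost-sinks or by two almost-sources according to whether the adjacent arcs reverse orientation, and conclude that the two types alternate around the cycle, forcing an even count. The only degenerate configuration your ``equally many marked and unmarked arcs'' claim glosses over---a board with no marked edges at all---is already excluded by the hypothesis, since there every edge is markable; it would be worth one explicit sentence, but it is not a gap in substance.
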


Game boards with involutive symmetry were also studied by the authors in~\cite{Alvarado}. A  game board has \emph{involutive symmetry} if there is a non-trivial symmetry of the board which is its own inverse. Every vertex $v$ (edge $e$) has a \emph{partner vertex} $v'$ (\emph{partner edge} $e$) which it is mapped to  under involutive symmetry. A vertex, edge, or cell is \emph{self-involutive} if the involution of that vertex, edge, or cell is itself. A cell is defined to be \emph{part-involutive} if it is not self-involutive, but one of the cell's edges has its partner also in the cell. A cell is \emph{nowhere-involutive} if no edge of the cell has its partner in the cell. For any board with involutive symmetry, every edge is partnered with another edge or itself.

Alvarado et al.~\cite{Alvarado} proved the following result for graphs with an involution. 

\begin{theorem}\label{invsym}{\rm \cite{Alvarado}} 
Let $G$ be a board with an involution such that each cell is either self-involutive or nowhere-involutive.  If there is no self-involutive edge, then Player~2 has a winning strategy. If there is exactly one self-involutive edge whose vertices are not fixed by the involution, then Player~1 has a winning strategy.
\end{theorem}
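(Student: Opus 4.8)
The plan is to use a mirror strategy based on the involution. Let $\sigma$ be the given involution of the board $G$. The idea is that one player will always respond to the opponent's move on an edge $e$ by playing on the partner edge $\sigma(e)$, marking it with the ``mirror-image'' direction so that the resulting configuration is symmetric under $\sigma$. I would first set up notation: for a marked edge $e$ with a chosen direction, let $\sigma(e)$ denote the image edge, and define the induced direction on $\sigma(e)$ to be the one obtained by transporting the arrow on $e$ through $\sigma$. The key invariant to maintain is that after the responding player's move, the whole board configuration is invariant under $\sigma$ (treating each pair $\{e,\sigma(e)\}$ of oppositely-oriented-with-respect-to-$\sigma$ edges as matched).

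First I would handle the case with no self-involutive edge, showing Player~2 wins. Player~2's strategy: whenever Player~1 marks an edge $e$, Player~2 marks $\sigma(e) \neq e$ with the mirror direction. I need to verify three things at each step: (i) $\sigma(e)$ is still unmarked when it is Player~2's turn — this holds because before Player~1's move the configuration was symmetric, so $e$ unmarked implies $\sigma(e)$ unmarked, and $\sigma(e) \neq e$ since there are no self-involutive edges; (ii) the mirror move is legal, i.e., it does not create a sink or source — since $\sigma$ is a board symmetry, the configuration near $\sigma(e)$ after Player~2's move is the $\sigma$-image of the configuration near $e$ after Player~1's move, and Player~1's move was legal (created no sink/source), so neither does Player~2's; (iii) Player~2's move does not complete a cycle cell — here I use the hypothesis that every cell is self-involutive or nowhere-involutive. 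If Player~2 completing a cycle cell $C$ were the issue, then by symmetry $\sigma(C)$ would also be a cycle cell completed at the same moment; if $C$ is nowhere-involutive then $C \neq \sigma(C)$ and $C$ was already completed by Player~1's move (contradicting that Player~1 didn't already win, or rather, Player~1 would have won first — need to argue Player~1's move completing $\sigma(C)=C'$ means Player 1 already won); if $C$ is self-involutive, one argues the penultimate edge of $C$ and its partner are both in $C$, so the arrow around $C$ cannot be consistently cyclic — a parity/orientation argument shows a self-involutive cell cannot be a cycle cell reached this way. I would also note Player~2 always has a move available (symmetry guarantees it), so Player~2 never makes the last move and hence, since the game cannot be drawn, Player~2 wins.

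Next, for the case of exactly one self-involutive edge $e_0$ whose endpoints are not fixed, Player~1 wins: Player~1's opening move is to mark $e_0$. Since $e_0$'s endpoints are swapped by $\sigma$, there are two legal directions for $e_0$ and each is taken to the other by $\sigma$; but crucially, after marking $e_0$ the configuration is ``$\sigma$-symmetric'' in the matched sense (the single marked edge is its own partner with the direction flipped appropriately — this is where needing the endpoints non-fixed matters, so that the flipped direction is the correct mirrored one and $e_0$ alone forms a symmetric configuration). Now Player~1 adopts the mirroring role for the rest of the game, responding to Player~2's move on $f$ by playing $\sigma(f)$; since $e_0$ is the unique self-involutive edge and it is already marked, every subsequent edge $f$ that Player~2 plays satisfies $\sigma(f) \neq f$ and is unmarked, so the same three verifications as above go through, and Player~1 never runs out of moves.

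The main obstacle I expect is verification (iii), handling cycle cells under the mirror strategy — specifically ruling out that the mirroring player is forced to complete a cycle cell. For nowhere-involutive cells this reduces to: if the mirror move on $\sigma(f)$ closes cell $C$, then Player~2's (or the opponent's) move on $f$ already closed the partner cell $\sigma(C) \neq C$, so the opponent had already won, contradiction — this needs a clean ``first cycle cell wins'' framing. For self-involutive cells the content is a small orientation lemma: a self-involutive cell, under an involution of the plane, either has its boundary orientation reversed by $\sigma$ (so a clockwise cycle maps to counterclockwise on the same cell, forcing all boundary arrows to be inconsistent) or the pairing of its boundary edges forces the penultimate-arrow configuration to already be symmetric, meaning the opponent's move and its mirror would have to be the same move. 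I would isolate this as a preliminary lemma and prove it by a careful case analysis on how $\sigma$ acts on the boundary cycle of a self-involutive cell (as a reflection or as a rotation by $\pi$), which is the one genuinely delicate spot; the rest is bookkeeping on the symmetry invariant.
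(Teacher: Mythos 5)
The core of your plan is right in spirit (respond on the partner edge and maintain a symmetry invariant), but the move you actually define is not the one that works, and the verification that genuinely needs the cell hypothesis is missing. You define the response by \emph{transporting} the opponent's arrow through $\sigma$, so that the marked configuration stays literally $\sigma$-invariant. The strategy the theorem rests on is the \emph{mirror-reverse}: play the partner edge with the transported arrow \emph{reversed}. The difference is fatal at vertices fixed by the involution, which the hypotheses do not exclude (the simplest case satisfying the theorem's hypotheses with no self-involutive edge is an even cycle whose axis of reflection passes through two of its vertices; the degree-$4$ joining vertex of Board $B$ in Figure~\ref{fig:symgraph} causes the same problem two pairs later). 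If Player~1 marks an edge incident to a fixed degree-$2$ vertex $v$ pointing into $v$, your prescribed reply marks the other edge at $v$ also pointing into $v$ and creates a sink, i.e.\ is illegal. Your verification (ii) misses this because it treats the neighborhood of $\sigma(e)$ after the reply as the $\sigma$-image of the neighborhood of $e$ after the opponent's move; that transfer is only valid when the two neighborhoods do not interact, and it fails exactly when $e$ and $\sigma(e)$ meet at a fixed vertex or lie in a self-involutive cell --- which is the only place anything can go wrong. Reversing the arrow is what prevents this (each partner pair contributes one arrow in and one arrow out at a fixed vertex), and it is also exactly what makes Player~1's opening move on $e_0$ consistent with the invariant: with endpoints swapped by $\sigma$, the arrow $i\rightarrow j$ transports to $j\rightarrow i$, so under your literal $\sigma$-invariance the opening position is \emph{not} symmetric; the ``flipped appropriately'' in your write-up is precisely the reversal, and it must be applied to every response, not only to $e_0$.

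Second, you have the win condition inverted, and consequently the key step is absent. Completing a cycle cell, or making the last possible move, \emph{wins}; so there is nothing to rule out in your (iii) --- indeed the strategy's first clause is ``complete a cycle if you can,'' which your strategy omits --- and your closing inference ``Player~2 never makes the last move and hence wins'' is backwards: the responder wins by exhaustion precisely because they always have a reply and therefore make the last move. What actually has to be shown is that the \emph{opponent} never completes a cycle cell, equivalently that the mirror-reverse reply is never a death move. This is where the hypotheses ``each cell is self-involutive or nowhere-involutive'' and ``at most one self-involutive edge'' enter: if the responder's reply left some cell $D$ one arrow short of a cycle for the opponent, then by the (anti-)symmetry invariant the opponent's own previous move had already left $\sigma(D)$ one arrow short, so clause (1) would have had the responder complete $\sigma(D)$ and win instead --- unless $D$ and $\sigma(D)$ coincide or share that last edge, i.e.\ unless there is a part-involutive cell or an extra self-involutive edge, exactly what the hypotheses forbid. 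This three-part verification (reply available, never a sink/source, never a death move) is the shape of the argument the paper itself carries out when it proves Theorem~\ref{mainresult}; your proposal establishes only the first part, argues the second by a symmetry transfer that fails at the critical vertices, and does not address the third at all.
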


To prove Theorem~\ref{invsym}, the author's provided a  ``mirror-reverse" strategy for the winning player to use in responding to the other player. The mirror-reverse strategy is as follows:
\begin{itemize}
\item[(1)] If possible to win by completing a cycle, do so.
\item[(2)] If that is not possible, mirror the other player's strategy by observing the player's most recent move on an edge and playing the partner edge with its arrow reversed.
\end{itemize}

When there is exactly one self-involutive edge whose vertices are not fixed by the involution, Player~1 begins the game by marking the self-involutive edge on their first turn and then using the mirror-reverse strategy for all other subsequent moves. The next two results are special cases of Theorem~\ref{invsym}. 

\begin{corollary}\cite{Alvarado}\label{rotsym}
Let $G$ be a board with $180^{\circ}$ rotational symmetry, and no edge and its partner part of the same cell. If there is no edge through the center of the board then Player~2 has a winning strategy. If there is such an edge, then Player~1 has a winning strategy.
\end{corollary}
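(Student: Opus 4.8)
The plan is to obtain Corollary~\ref{rotsym} as a direct consequence of Theorem~\ref{invsym} by verifying that a $180^{\circ}$ rotation of the board supplies an involution of exactly the type required there. First I would observe that rotation by $180^{\circ}$ about the center of the board is a non-trivial symmetry of the game board whose square is the identity, hence an involutive symmetry in the sense of the excerpt; the partner of any vertex, edge, or cell is simply its image under this rotation.

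Next I would translate the two hypotheses of the corollary into the language of Theorem~\ref{invsym}. The assumption that no edge and its partner lie in a common cell says precisely that no cell is part-involutive, so every cell is either self-involutive or nowhere-involutive, which is the standing hypothesis of Theorem~\ref{invsym}. It then remains to analyze self-involutive edges. I would argue that an edge $e$ is self-involutive under the rotation if and only if $e$ passes through the center: if $e$ is fixed as a set, then (taking the usual straight-line picture) its midpoint is a fixed point of the rotation, and the only fixed point is the center; conversely an edge through the center is carried to itself with its two endpoints interchanged. Moreover two distinct edges cannot both pass through the center, since they would then meet at a point of the plane that is not a vertex, violating the planar embedding; hence there is at most one self-involutive edge, and when it exists its two vertices are swapped (not fixed) by the rotation, because a $180^{\circ}$ rotation has a single fixed point and a simple graph has no loop at the center vertex.

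Finally I would split into the two cases. If there is no edge through the center, then there is no self-involutive edge, and Theorem~\ref{invsym} gives Player~2 a winning strategy (here one also notes that if the center happens to be a vertex then again no edge is self-involutive, consistent with this case). If there is an edge through the center, it is the unique self-involutive edge and its vertices are not fixed, so Theorem~\ref{invsym} gives Player~1 a winning strategy, with Player~1 opening by marking that central edge and then playing the mirror-reverse strategy.

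The only step needing genuine care is the geometric bookkeeping around the center: confirming that ``self-involutive edge'' and ``edge through the center'' coincide, that at most one such edge can occur, and that its endpoints are never fixed. This is not deep, but it is exactly where the hypotheses of the corollary must be matched to those of Theorem~\ref{invsym}; once that matching is established, the corollary follows immediately.
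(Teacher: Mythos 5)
Your derivation is correct and matches the paper's route: the paper presents this corollary precisely as a special case of Theorem~\ref{invsym}, obtained by taking the involution to be the $180^{\circ}$ rotation and checking, as you do, that the hypothesis rules out part-involutive cells and that an edge through the center is the unique self-involutive edge, with endpoints swapped rather than fixed.
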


\begin{corollary}\label{mirrorsymbasic}\cite{Alvarado}
Let $G$ be a board that is symmetric by reflection across some line, with no edges along that axis of symmetry and at most one edge crossing that axis of symmetry.
On this board, Player~2 has a winning strategy if there is no edge crossing this axis of symmetry. If there is a single edge crossing this axis of symmetry, Player~1 has a winning strategy.
\end{corollary}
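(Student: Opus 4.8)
The plan is to obtain this corollary as a direct instance of Theorem~\ref{invsym}. A reflection of the plane across a line $\ell$ is a non-trivial symmetry that is its own inverse, so a board $G$ symmetric under such a reflection is a board with an involution in the sense of Theorem~\ref{invsym}. It therefore suffices to (i)~verify that under the stated hypotheses every cell is self-involutive or nowhere-involutive, and (ii)~identify the self-involutive edges, so that the two conclusions of Theorem~\ref{invsym} become the two cases of the corollary.

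For step (i), the key is a planarity dichotomy: since the reflection permutes the cells of $G$ and fixes $\ell$ pointwise, a cell whose open region meets $\ell$ is carried to a cell overlapping itself and hence is fixed, so self-involutive; a cell whose open region is disjoint from $\ell$ lies strictly on one side of $\ell$, and its partner cell lies strictly on the other side. Now suppose toward a contradiction that some cell $c$ is part-involutive, with an edge $e$ and its partner $e'$ both on the boundary of $c$ and $e\neq e'$. Because there are no edges along $\ell$ and at most one edge crossing $\ell$ (a second crossing edge would be the partner of the first), $e$ does not cross $\ell$, so the interior of $e$ lies strictly on one side of $\ell$ and the interior of $e'$ strictly on the other. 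Since $c$ is path-connected and its closure contains both $e$ and $e'$, a path in $c$ from a neighborhood of $e$ to a neighborhood of $e'$ must cross $\ell$; hence $c$ meets $\ell$ and is self-involutive, contradicting part-involutivity. Thus no cell is part-involutive, and the hypothesis of Theorem~\ref{invsym} is met.

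For step (ii), a self-involutive edge is mapped to itself by the reflection, which forces it either to lie along $\ell$ or to meet $\ell$ without being contained in it; the former is excluded by hypothesis, and the latter holds for at most one edge. If no edge crosses $\ell$, then there is no self-involutive edge and the first part of Theorem~\ref{invsym} gives Player~2 a winning strategy. If exactly one edge $e_0$ crosses $\ell$, then $e_0$ is the unique self-involutive edge, and its two endpoints are interchanged by the reflection: neither lies on $\ell$, since if one did then (as $e_0$ is self-involutive) the other would too and $e_0$ would lie along $\ell$. So the endpoints of $e_0$ are not fixed, and the second part of Theorem~\ref{invsym} gives Player~1 a winning strategy.

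The reduction itself is routine; the main obstacle is making the topological assertions of step (i) fully rigorous for an arbitrary planar embedding---in particular the dichotomy that a cell is self-involutive exactly when its open region meets $\ell$, and the careful treatment of edges or vertices that touch $\ell$ without properly crossing it. These can be handled using that distinct faces of a plane graph have disjoint open regions together with the fact that the reflection permutes the faces and the edges of $G$.
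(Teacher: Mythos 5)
Your proposal is correct and matches the intended derivation: the paper (following Alvarado et al.) presents this corollary precisely as a special case of Theorem~\ref{invsym}, with the reflection as the involution, the no-edges-on-axis/at-most-one-crossing hypotheses ruling out part-involutive cells, and the crossing edge (if any) serving as the unique self-involutive edge with unfixed endpoints. Your write-up simply makes the topological verification of these points explicit, which is consistent with the cited argument.
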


\section{Cactus graphs}\label{cactusgraph}
In this paper, we are concerned with winning strategies for games played on certain types of cactus graphs. A \emph{cactus} (sometimes called a cactus tree) is a connected graph in which any two simple cycles have at most one vertex in common. Equivalently, it is a connected graph in which every edge belongs to at most one simple cycle, or (for a nontrivial cactus) in which every block (maximal subgraph without a cut-vertex) is an edge or a cycle. We say that a graph is \emph{triangle-free} if it contains no $C_3$ graphs. 

The primary goal of this paper is to extend the symmetry argument of \cite{Alvarado} Theorem 2.3 to a certain class of graphs that do not possess global symmetry but instead can be viewed as consisting of many joined parts, each possessing its own  local ``axis of symmetry." This is done in Sections~\ref{cactusgraph} and~\ref{mainresultsection} with our main result being Theorem~\ref{mainresult}.
The results in these sections pertain to triangle-free cactus graphs in which every edge of the graph belongs to exactly one cycle. 
We demonstrate a winning strategy for
graphs of this type satisfying certain symmetry conditions. We will first work through examples of two, three, and four joined cycles in order to help motivate those conditions required in our main theorem in Section~\ref{mainresultsection}. The strategies outlined in the examples in Section~\ref{cactusgraph} are formally proved in the main result Theorem~\ref{mainresult}.

In Alvarado et al.~\cite{Alvarado}, the authors observed that their results showed for graphs with an odd number of edges, Player~1 had a winning strategy, otherwise Player~2 had a winning strategy. This led the authors to pose the following question: Is there a game board that does not follow the parity pattern of Player~1 having a winning strategy when there is an odd number of edges, otherwise Player~2 winning?
In Section 5, we discuss cactus graphs that are not triangle-free and use them to answer Alvarado et al.'s question by showing there are game boards with an even number of edges in which Player~1 has a winning strategy.

\subsection{Two joined cycles}

We begin by detailing the results of the class of triangle-free cactus graphs composed of two cycles joined together at a single vertex. Notice that when two cycles are joined together at a vertex the resulting graph has reflective symmetry with the axis of symmetry passing through the joining vertex and through either a vertex or an edge in each of the joined cycles, as shown in Figure~\ref{fig:symgraph} Board $A$ below. In the case that the two joined cycles are of the same degree, the board will also have an axis of reflective symmetry passing only through the joining vertex, as shown in Figure~\ref{fig:symgraph} Board $B$.

\begin{figure}[ht]
\[
\begin{array}{cc}	
\begin{tikzpicture}[scale=.45]
			 \begin{scope}[very thick, every node/.style={sloped,allow upside down}]
			 \Bvertex (A) at (1.5,0){};
			 \Bvertex (B) at (.5, 1.414){};
			 \Bvertex (C) at (-1.5,1.414){};
			 \Bvertex (D) at (-1.5,-1.414){};
			 \Bvertex (E) at (.5,-1.414){};
			 \Bvertex (F) at (2.586, 1.414){};
			 \Bvertex (G) at (4,1.414){};
			 \Bvertex (H) at (5.414,1.414){};
			 \Bvertex (I) at (5.414,-1.414){};
			 \Bvertex (J) at (4,-1.414){};
			 \Bvertex (K) at (2.586,-1.414){};
			 \Bvertex (L) at (-2.6, 0){};
			 \draw (A) to (B);
			 \draw (B) to (C);
			 \draw (L) to (C); 
			 \draw (L) to (D); 
			 \draw (E) to (A);
			 \draw (A) to (F);
			 \draw (F) to (G); 
			 \draw (G) to (H); 
			 \draw (H) to (I);
			 \draw (I) to (K);
			 \draw (J) to (K); 
			 \draw (K) to (A); 
			 \draw (D) to (E); 
			 \draw[dashed] (5.7,0) --(-2.9,0){};
			 \end{scope}
\end{tikzpicture}
&
\begin{tikzpicture}[scale=.45]
			 \begin{scope}[very thick, every node/.style={sloped,allow upside down}]
			 \Bvertex (A) at (1.5,0){};
			 \Bvertex (B) at (.414, 1.414){};
			 \Bvertex (C) at (-1,1.414){};
			 \Bvertex (D) at (-1,-1.414){};
			 \Bvertex (E) at (.414,-1.414){};
			 \Bvertex (F) at (2.586, 1.414){};
			 \Bvertex (G) at (4,1.414){};
			 \Bvertex (H) at (5.414,1.414){};
			 \Bvertex (I) at (5.414,-1.414){};
			 \Bvertex (J) at (4,-1.414){};
			 \Bvertex (K) at (2.586,-1.414){};
			 \Bvertex (L) at (-2.414,1.414){};
			 \Bvertex (M) at (-2.414,-1.414){};
			 \draw (A) to (B);
			 \draw (B) to (C);
			 \draw (E) to (A);
			 \draw (A) to (F);
			 \draw (F) to (G); 
			 \draw (G) to (H); 
			 \draw (H) to (I);
			 \draw (I) to (K);
			 \draw (J) to (K); 
			 \draw (K) to (A); 
			 \draw (D) to (E); 
			 \draw (C) to (L); 
			 \draw (L) to (M); 
			 \draw (M) to (D); 
			 \draw[dashed] (1.5,1.414) --(1.5,-1.414){};
			 \end{scope}
\end{tikzpicture}\\
\text{Board } A & \text{ Board }  B\\
\end{array}
\]
\caption{Two cycles joined at a vertex with their axis of reflective symmetry shown.}
\label{fig:symgraph}
\end{figure}
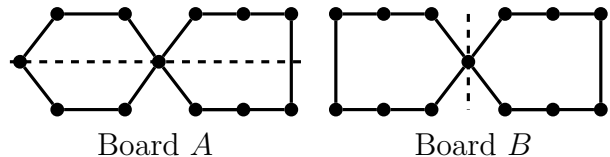

Because of this symmetry, a winning strategy in these cases follows from Corollary~\ref{mirrorsymbasic}. 

\begin{proposition}\label{parity}
Let $G$ be a board containing two cycles, $C_m$ and $C_n$, with $m>3$ and $n>3$, such that $C_m$ and $C_n$ are connected by a single vertex.  If $C_m$ and $C_n$ are of different parity, then Player~1 has a winning strategy. If $C_m$ and $C_n$ are both of  even parity, then Player~2 has a winning strategy. 
\end{proposition}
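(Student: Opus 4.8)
The plan is to reduce this to Corollary~\ref{mirrorsymbasic} by exhibiting the right reflective axis in each parity case, and then checking that the hypotheses of that corollary (edges along the axis, edges crossing the axis) are satisfied. Write the joining vertex as $v$, and suppose $C_m$ and $C_n$ are arranged so that $v$ is the unique shared vertex; the board $G$ has $m+n$ edges.

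First consider the case where $C_m$ and $C_n$ have different parity, so $m+n$ is odd; I want Player~1 to win, which by Corollary~\ref{mirrorsymbasic} means I need a reflective symmetry with no edge along the axis and exactly one edge crossing it. Since $m$ and $n$ have different parities, exactly one of them is odd, say $m$ is odd and $n$ is even (the other sub-case is symmetric). Take the axis of reflection to be the line through $v$ that is the natural axis of symmetry of Board~$A$ in Figure~\ref{fig:symgraph}: because $C_m$ has odd length, the axis through $v$ passes through the midpoint of the unique edge of $C_m$ ``opposite'' $v$, hence crosses that one edge; because $C_n$ has even length, the axis through $v$ passes through the vertex of $C_n$ antipodal to $v$, so it meets $C_n$ only at vertices and crosses no edge of $C_n$. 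No edge lies along the axis (an edge of a cycle joining two points symmetric about the axis is never fixed pointwise by the reflection when the cycle length exceeds $1$, and here both cycles have length $>3$). Thus there is exactly one edge crossing the axis and none along it, so Corollary~\ref{mirrorsymbasic} gives Player~1 a winning strategy. I should also remark that the two cells of $G$ (the interior of $C_m$ and the interior of $C_n$) are each mapped to themselves by the reflection, so no edge has its partner in the same cell — this is implicitly needed for the corollary, and it holds because the reflection fixes each cycle setwise.

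Now the case where $m$ and $n$ are both even, so $m+n$ is even and I want Player~2 to win; by Corollary~\ref{mirrorsymbasic} I need a reflective axis with no edge along it and no edge crossing it. Again use the axis through $v$: since both $C_m$ and $C_n$ have even length, the axis through $v$ passes through the antipodal vertex of each cycle, so it meets each cycle only in its two fixed vertices and crosses no edge; and as before no edge lies along the axis. Each cell is again preserved setwise. Hence Corollary~\ref{mirrorsymbasic} applies with zero crossing edges and gives Player~2 a winning strategy, completing the proof.

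The main obstacle is not the combinatorial game theory — once the symmetry is identified the corollary does all the work — but rather the careful geometric bookkeeping that the stated reflection really is a symmetry of the \emph{planar} board (cells included), that it has the claimed intersection pattern with the edge set, and that the edge-crossing count is exactly $1$ versus $0$ as claimed; in particular one must be sure that when $C_m$ is odd the axis does not accidentally pass through a vertex of $C_m$ other than $v$, and when $C_m$ is even it does not accidentally bisect an edge. The hypotheses $m>3$ and $n>3$ are what guarantee there are no degenerate small cycles (e.g.\ $C_3$, which is not triangle-free, or multigraph artifacts) that would spoil the "no edge along the axis'' condition, so I would invoke them explicitly at that point.
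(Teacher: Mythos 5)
Your proof takes essentially the same route as the paper: identify the reflective symmetry of $G$ across the line through the joining vertex, observe that an odd cycle contributes exactly one axis-crossing (self-involutive) edge while an even cycle contributes none, and invoke Corollary~\ref{mirrorsymbasic} with one crossing edge in the mixed-parity case and zero in the even--even case. One parenthetical remark is off --- an edge of $C_m$ does have its reflection partner in the same cell, so "no edge has its partner in the same cell" is false; but this is harmless, since what the underlying Theorem~\ref{invsym} requires is that each cell be self-involutive or nowhere-involutive, and here both cells are self-involutive, as you in fact note.
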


\begin{proof}
Note that $G$ has reflective symmetry across the line that passes through the degree 4 vertex and through either a vertex or edge in both $C_m$ and $C_n$, as displayed in Board A of Figure \ref{fig:symgraph}. When $m$ and $n$ are of different parity, the result follows directly from Corollary~\ref{mirrorsymbasic} where $G$ has exactly one self-involutive edge. When $m$ and $n$ are of even parity, there is no edge that crosses the axis of symmetry and the result again follows from  Corollary~\ref{mirrorsymbasic}.
\end{proof}

Recall that the proof of Theorem~\ref{invsym} uses the mirror-reverse strategy for the winning player. The mirror-reverse strategy only applies if there is no self-involutive edge or exactly one self-involutive edge whose vertices are not fixed by the involution. In the case where there are two odd cycles of different lengths, there are two self-involutive edges and the mirror-reverse strategy fails. 

\begin{example}[Two joined odd cycles]

Figure~\ref{fig:c57} shows an example of the mirror-reverse strategy failing and gives a modification to the strategy to show there is a winning strategy for Player~2. Note that the labelling on the edges denote the direction and the order in which the edges were marked with the odd numbered edges being marked by Player~1 and the even numbered edges being marked by Player~2. We use this labelling for all game boards played throughout the paper.
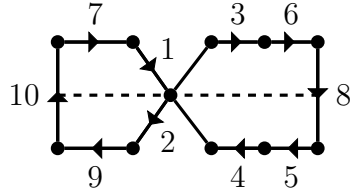
\begin{figure}[H]
    \centering
\begin{tikzpicture}[scale=.5]
			 \begin{scope}[very thick, every node/.style={sloped,allow upside down}]
			 \Bvertex (A) at (1.5,0){};
			 \Bvertex (B) at (.5, 1.414){};
			 \Bvertex (C) at (-1.5,1.414){};
			 \Bvertex (D) at (-1.5,-1.414){};
			 \Bvertex (E) at (.5,-1.414){};
			 \Bvertex (F) at (2.586, 1.414){};
			 \Bvertex (G) at (4,1.414){};
			 \Bvertex (H) at (5.414,1.414){};
			 \Bvertex (I) at (5.414,-1.414){};
			 \Bvertex (J) at (4,-1.414){};
			 \Bvertex (K) at (2.586,-1.414){};
			 \draw (A) to (B);
			 \draw (B) to (C);
			 \draw (D) to (C); 
			 \draw (E) to (A);
			 \draw (A) to (F);
			 \draw (F) to (G); 
			 \draw (G) to (H); 
			 \draw (H) to (I);
			 \draw (I) to (K);
			 \draw (J) to (K); 
			 \draw (K) to (A); 
			 \draw (D) to (E); 
			 \draw (B) -- node[font=\small, label={[label distance=-6pt]:$1$} ] {\midarrow} (A);
			 \draw (A) -- node[font=\small, label={[label distance=-6pt]:$2$} ] {\midarrow} (E);
			 \draw (F) -- node[font=\small, label={[label distance=-2pt]:$3$} ] {\midarrow} (G);
			 \draw (J) -- node[font=\small, label={[label distance=-2pt]:$4$} ] {\midarrow} (K);
			 \draw (I) -- node[font=\small, label={[label distance=-2pt]:$5$} ] {\midarrow} (J);
			 \draw (G) -- node[font=\small, label={[label distance=-2pt]:$6$} ] {\midarrow} (H);
			 \draw (C) -- node[font=\small, label={[label distance=-2pt]:$7$} ] {\midarrow} (B);
			 \draw (H) -- node[font=\small, label={[label distance=-2pt] above:$8$} ] {\midarrow} (I);
			 \draw (E) -- node[font=\small, label={[label distance=-2pt]:$9$} ] {\midarrow} (D);
			 \draw (D) -- node[font=\small, label={[label distance=-2pt]:$10$} ] {\midarrow} (C);
		     \draw[dashed] (-1.5,0) --(5.414,0){};
			 \end{scope}
\end{tikzpicture}

    \caption{Two odd cycles $C_5$ and $C_7$ joined at a vertex with reflective symmetry about the horizontal line.}
    \label{fig:c57}
\end{figure}

In Figure~\ref{fig:c57}, there is a game board consisting of two odd cycles joined at a single vertex, cycle $C_5$ on the left and cycle $C_7$ on the right. Player~1 begins the game by playing on the edge marked 1. Utilizing the mirror-reverse strategy, Player~2 plays the partner edge to Player~1's  first marked edge on their first turn in the opposite direction, and thus plays move 2. Moves 3 through 6 continues to follow the mirror-reverse strategy, with Player~2 utilizing the strategy. After Player~1 plays move 7, note that using the mirror-reverse strategy leads to a death move for Player~2 on the cycle graph $C_5$. The mirror-reverse strategy fails here for Player~2 because if they were to play the partner edge with the arrow reversed to follow the same direction  (clockwise or counterclockwise) then Player~1 could mark the self-involutive edge (the last unmarked edge on the cycle $C_5$) and win by completing a cycle. Instead of making a death move, Player~2 can modify the mirror-reverse strategy and play the self-involutive edge on the cycle $C_7$ in its playable direction, denoted as move 8. Note that any edge Player~1 plays on their next turn would be a death move. So suppose Player~1 marks the edge denoted 9 on the $C_5$ board. After move 9 is made, Player~2 wins by marking the edge denoted 10 and creating a cycle.

\end{example}

In order to address the odd-odd example above and also larger cactus graphs as in our main theorem we introduce a modified mirror-reverse strategy. 
The modified mirror-reverse strategy can be played on boards with involutive symmetry and is defined from the perspective of Player~2 because it relies on responding to the opposing player's moves. At the start of the game, if there are an even number of self-involutive edges on the game board, then Player~2 can implement the modified mirror-reverse strategy. However,  if the total number of self-involutive edges is odd, then Player~1 will begin the game by marking a self-involutive edge and then is able to implement the modified mirror-reverse strategy functioning as the second player.

\begin{defn}

We define the \emph{modified mirror-reverse strategy} as follows:
\begin{enumerate}
\item If possible  to win by completing a cycle, do so.
\item If it is not possible to win by completing a cycle and the opposing player plays a self-involutive edge, then  play an available self-involutive edge in any playable direction. 
\item If it is not possible to win by completing a cycle and the opposing player does not play a self-involutive edge, then  mirror the opposing player's strategy by  observing the player's most recent move on an edge and playing the partner edge with its arrow reversed unless that move is a death move. If the move is a death move,  play an available self-involutive edge in any playable direction. 
\end{enumerate}
\end{defn}

Utilizing this modified mirror reverse strategy, Player~2 has a winning strategy in the case of two joined odd cycles.

The following proposition is a consequence of the main result Theorem~\ref{mainresult} in Section~\ref{mainresultsection}.

\begin{proposition}\label{parity}
Let $G$ be a board containing two cycles, $C_m$ and $C_n$, with $m>3$ and $n>3$, such that $C_m$ and $C_n$ are connected by a single vertex.  If $C_m$ and $C_n$ are both of odd parity, then Player~2 has a winning strategy. 
\end{proposition}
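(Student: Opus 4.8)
The plan is to show that for a board $G$ consisting of two odd cycles $C_m$ and $C_n$ (with $m,n>3$) joined at a single vertex, Player~2 wins by using the modified mirror-reverse strategy with respect to the reflective symmetry that passes through the joining vertex and through a self-involutive edge in each cycle (Board~$A$ of Figure~\ref{fig:symgraph}). First I would fix this involution $\sigma$: since $m$ and $n$ are both odd, the axis meets each cycle in one vertex and crosses exactly one edge, so there are exactly two self-involutive edges $e_m\in C_m$ and $e_n\in C_n$, and no vertex fixed by $\sigma$ other than the joining vertex is relevant (the joining vertex is fixed). Each cell is self-involutive. Because there are an even number ($2$) of self-involutive edges, Player~2 is the one to implement the modified mirror-reverse strategy, consistent with the discussion preceding the definition.

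The key steps, in order: (1) Verify the strategy is \emph{well-defined}, i.e. whenever Player~2 is asked to respond, either a winning cycle-completing move is available, or the prescribed partner/self-involutive move is legal (does not create a sink/source and is not forced to be unavailable). The partner-edge move with reversed arrow is legal by the usual mirror argument — the board configuration before Player~1's move was symmetric, so the mirror image of a legal move is legal — \emph{unless} it would be a death move, in which case clause~3 redirects Player~2 to an available self-involutive edge; I must check such an edge is always available and playable at that moment. (2) Show the strategy \emph{maintains an invariant}: after each of Player~2's moves the marked configuration is ``almost symmetric'' — precisely, it is symmetric except possibly for one marked self-involutive edge having been played by one player without response, or exactly balanced. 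Track the parity of self-involutive edges consumed. (3) Use the invariant to argue Player~2 \emph{never loses}: Player~1 can never complete a cycle cell, because completing a cell requires marking its last edge, and by symmetry if Player~1's move would complete cell $c$ then in the previous symmetric position Player~2 had already been able to complete $c$ (or its partner) — and clause~1 says Player~2 would have taken it. (4) Conclude Player~2 \emph{wins}: the game cannot end in a draw (Zermelo), so either Player~2 completes a cycle, or the last move is made — and a parity/pairing count shows the total number of moves played works out so that Player~2 makes the last move; here I would lean on the fact that edges come in $\sigma$-partner pairs together with the two self-involutive edges, and the modified strategy ensures the self-involutive edges are consumed in a way that hands the final move to Player~2, exactly as illustrated in the $C_5$–$C_7$ example (moves $7$–$10$).

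The main obstacle I expect is step~(3)–(4): the ordinary mirror-reverse argument breaks precisely because of the two self-involutive edges, as the authors note, so the delicate point is handling the ``endgame'' once one self-involutive edge has been played and the board splits, in effect, into a symmetric remainder plus a leftover odd cycle with one marked edge. I would handle this by combining Theorem~\ref{cyclen} / the $C_n$ unmarkable-edges lemma applied to each cycle with a careful case analysis of who is forced into the first death move: the claim is that after Player~1 is driven to play the penultimate edge of some cell (a death move, as in move~$9$), Player~2 completes that cell (move~$10$), so the redirect-to-self-involutive-edge clause guarantees Player~2 always has a safe waiting move while Player~1 runs out of safe moves first. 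Making the bookkeeping of ``safe moves remaining'' rigorous — rather than appealing to the picture — is where the real work lies, and since this is subsumed by Theorem~\ref{mainresult}, I would present it here only as the two-cycle base case of that induction.
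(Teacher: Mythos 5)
Your proposal matches the paper's approach: the paper gives no standalone proof of this proposition but states it as a direct consequence of Theorem~\ref{mainresult}, whose proof is exactly the modified mirror-reverse argument (the dictated move is always available, never creates a sink/source, never is a death move) that you sketch here specialized to the two odd cycles with their two self-involutive edges. Since you yourself defer the rigorous bookkeeping to Theorem~\ref{mainresult}, your route and the paper's coincide.
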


\subsection{Three joined cycles}

Let's consider how we might generalize such a strategy to the case of cactus graphs consisting of three joined cycles. The first question that arises is: How should the axis of symmetry be defined for the graph? Consider the cactus graph in Figure~\ref{fig:c579} below.  Note that this graph does not have reflective symmetry. However, we can define axes of symmetry for each of the three individual cycles, which gives each cycle its own  local axis of symmetry. For the outer cycles $C_5$ and $C_7$, we will choose the axes of symmetry to pass through the vertices of degree 4. For the middle cycle $C_9$, an axis passing through the two degree 4 vertices would not define a reflective symmetry. Instead, we must define an axis of symmetry which is equidistant from the two degree 4 vertices, thus causing them to be partners via reflection, as shown in the figure. The partnering of the higher degree vertices is crucial to our strategy, as we will discuss later on. We define a vertex to be a vertex of \emph{high degree} if the vertex is of degree 4 or more. Note there are many ways to join three cycles including the board in which all three cycles are joined at a single vertex. In this case, we would define all three axes of symmetry to pass through the degree 6 vertex. In general, we utilize local axes of symmetry for individual cycles to define the reflective symmetry of the graph.

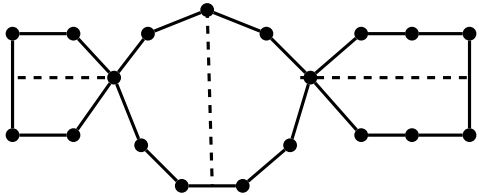
\begin{figure}[ht]
\centering
\begin{tikzpicture}[scale=.45 ]
			 \begin{scope}[very thick, every node/.style={sloped,allow upside down}]
			 	 \Bvertex (A) at (0,-1){};
			 \Bvertex (B) at (1.8, -1){};
			 \Bvertex (C) at (3.2,0.2){};
			 \Bvertex (D) at (3.8,2.2){};
			 \Bvertex (E) at (2.5,3.5){};
			 \Bvertex (F) at (.75,4.2){};
			 \Bvertex (G) at (-1,3.5){};
			 \Bvertex (H) at (-2,2.2){};
			 \Bvertex (I) at (-1.2,0.2){};
		     \Bvertex (J) at (-3.2,3.5){};
			 \Bvertex (K) at (-5,3.5){};
			 \Bvertex (L) at (-5,0.5){};
			 \Bvertex (M) at (-3.2,0.5){};
			 \Bvertex (N) at (5.3,3.5){};
			 \Bvertex (S) at (5.3,.5){};
			 \Bvertex (O) at (6.8,3.5){};
			 \Bvertex (R) at (6.8,0.5){};
			 \Bvertex (P) at (8.5,3.5){};
			 \Bvertex (Q) at (8.5,0.5){};
			 \draw (A) to (B);
			 \draw (B) to (C);
			 \draw (C) to (D); 
			 \draw (D) to (E);
			 \draw (E) to (F);
			 \draw (F) to (G); 
			 \draw (G) to (H); 
			 \draw (H) to (I);
			 \draw (I) to (A);
			 \draw (H) to (J); 
			 \draw (J) to (K); 
			 \draw (K) to (L); 
			 \draw (L) to (M);
			 \draw (M) to (H);
			 \draw (D) to (N); 
			 \draw (N) to (O);
			 \draw (O) to (P);
			 \draw (P) to (Q); 
			 \draw (Q) to (R); 
			 \draw (R) to (S);
			 \draw (S) to (D);
			 \draw[dashed] (.75,4.2) --(0.9,-1){};
			 \draw[dashed] (-1.8, 2.2) --(-5,2.2){};
			 \draw[dashed] (3.5,2.2) --(8.5,2.2){};
			 \end{scope}
\end{tikzpicture}

\caption{Cactus graph with three cycles}
\label{fig:c579}
\end{figure}

\begin{example}[Three joined cycles winning strategy]
We now utilize the modified mirror-reverse strategy and show that Player~1 has a winning strategy on the graph in Figure~\ref{fig:c579}. 
Note that in Figure~\ref{fig:c579}, there are three edges which are self-involutive with respect to the axes of symmetry on the cycles in which they lie.   Since there are an odd number of self-involutive edges, Player~1 should begin the game by marking any one of the three self-involutive edges. Suppose Player~1 marks the self-involutive edge on the cycle $C_9$. From this point on, Player~1 will use the modified mirror-reverse strategy to respond to Player~2's moves. 
Figure~\ref{fig:c579played} demonstrates an example of a completed game on the game board in Figure~\ref{fig:c579} using the modified mirror-reverse strategy.

\begin{figure}[H]
\centering
\begin{tikzpicture}[scale=.6]
			 \begin{scope}[very thick, every node/.style={sloped,allow upside down}]
			 \Bvertex (A) at (0,-1){};
			 \Bvertex (B) at (1.8, -1){};
			 \Bvertex (C) at (3.2,0.2){};
			 \Bvertex (D) at (3.8,2.2){};
			 \Bvertex (E) at (2.5,3.5){};
			 \Bvertex (F) at (.75,4.2){};
			 \Bvertex (G) at (-1,3.5){};
			 \Bvertex (H) at (-2,2.2){};
			 \Bvertex (I) at (-1.2,0.2){};
		     \Bvertex (J) at (-3.2,3.5){};
			 \Bvertex (K) at (-5,3.5){};
			 \Bvertex (L) at (-5,0.5){};
			 \Bvertex (M) at (-3.2,0.5){};
			 \Bvertex (N) at (5.3,3.5){};
			 \Bvertex (S) at (5.3,.5){};
			 \Bvertex (O) at (6.8,3.5){};
			 \Bvertex (R) at (6.8,0.5){};
			 \Bvertex (P) at (8.5,3.5){};
			 \Bvertex (Q) at (8.5,0.5){};
			 \draw (A) to (B);
			 \draw (B) to (C);
			 \draw (C) to (D); 
			 \draw (D) to (E);
			 \draw (E) to (F);
			 \draw (F) to (G); 
			 \draw (G) to (H); 
			 \draw (H) to (I);
			 \draw (I) to (A);
			 \draw (H) to (J); 
			 \draw (J) to (K); 
			 \draw (K) to (L); 
			 \draw (L) to (M);
			 \draw (M) to (H);
			 \draw (D) to (N); 
			 \draw (N) to (O);
			 \draw (O) to (P);
			 \draw (P) to (Q); 
			 \draw (Q) to (R); 
			 \draw (R) to (S);
			 \draw (S) to (D);
			 \draw[dashed] (.75,4.2) --(0.9,-1){};
			 \draw[dashed] (-1.8, 2.2) --(-5,2.2){};
			 \draw[dashed] (3.5,2.2) --(8.5,2.2){};
			 \draw (A) -- node[font=\small, label={[label distance=-2pt]below:$1$} ] {\midarrow} (B);
			 \draw (F) -- node[font=\small, label={[label distance=-5pt] below right:$2$} ] {\midarrow} (G);
			 \draw (E) -- node[font=\small, label={[label distance=-5pt] below left:$3$} ] {\midarrow} (F);
			 \draw (G) -- node[font=\small, label={[label distance=-5pt]:$4$} ] {\midarrow} (H);
			 \draw (D) -- node[font=\small, label={[label distance=-5pt]:$5$} ] {\midarrow} (E);
			 \draw (H) -- node[font=\small, label={[label distance=-6pt]:$6$} ] {\midarrow} (I);
			 \draw (C) -- node[font=\small, label={[label distance=-5pt]:$7$} ] {\midarrow} (D);
			 \draw (J) -- node[font=\small, label={[label distance=-7pt]:$8$} ] {\midarrow} (H);
			 \draw (H) -- node[font=\small, label={[label distance=-7pt]:$9$} ] {\midarrow} (M);
			 \draw (K) -- node[font=\small, label={[label distance=-5pt]:$10$} ] {\midarrow} (J);
			 \draw (P) -- node[font=\small, label={[label distance=-5pt]:$11$} ] {\midarrow} (Q);
			 \draw (O) -- node[font=\small, label={[label distance=-3pt]below:$12$} ] {\midarrow} (N);
			 \draw (S) -- node[font=\small, label={[label distance=-5pt]below:$13$} ] {\midarrow} (R);
			 \draw (N) -- node[font=\small, label={[label distance=-7pt]below:$14$} ] {\midarrow} (D);
			 \draw (D) -- node[font=\small, label={[label distance=-7pt]below:$15$} ] {\midarrow} (S);
			 \draw (I) -- node[font=\small, label={[label distance=-5pt]:$16$} ] {\midarrow} (A);
			 \draw (B) -- node[font=\small, label={[label distance=-5pt]:$17$} ] {\midarrow} (C);
			 \end{scope}
\end{tikzpicture}

\caption{Completed game on the game board in Figure~\ref{fig:c57}.}
\label{fig:c579played}
\end{figure}
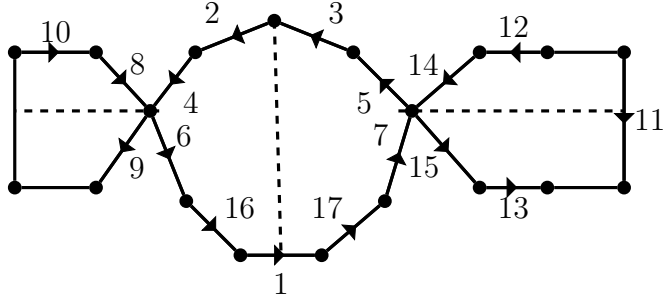
In the completed game shown in Figure~\ref{fig:c579played}, note that none of Player~1's moves on the cycle $C_9$ can result in a death move since a death move would only occur by marking the second-to-last edge on the cycle, which would be a Player~2 move by parity.

If Player~2 plays the self-involutive edge on either the cycle $C_5$ or the cycle $C_7$, Player~1 should respond by marking the remaining self-involutive edge. (This does not occur in our sample game.) If Player~2 plays on a non-self-involutive edge on either cycles  $C_5$ or $C_7$, then Player~1 should respond by marking the mirror-reverse, as long as this move is not a death move. This is demonstrated by moves 8 and 9 on the $C_5$ and moves 12 through 15 on the $C_7$. If the mirror-reverse were to be a death move, then Player~1 should instead mark the self-involutive edge on the other cycle. When Player~2 plays move 10 on the cycle $C_5$, we note that the mirror-reverse would be a death move for Player~1. So instead Player~1 plays move 11 on the self-involutive edge on the cycle $C_7$. If the death move had instead occurred on the $C_7$, Player~1 would have marked the self-involutive edge on the cycle $C_5$.

In our sample game, Player~2 is forced to make a death move after Player~1 plays move 15. Since sinks and sources are not permitted, the only markable edges remaining at this point in the game are either a death move on the cycle $C_5$ or on the cycle $C_9$ (as shown in the sample game). Here, Player~1 wins by completing a cycle on the $C_9$ graph. There are a number of other ways in which a game could play out on this board using the modified mirror-reverse strategy--- all of which will result in a Player~1 win. The reader is encouraged to explore some of these games on their own.

\end{example}

In Theorem~\ref{mainresult} (the main result), we show that in general, the winning strategy on triangle-free cactus graph game boards depends on the number of self-involutive edges. If there is an odd number of self-involutive edges, Player~1 has the winning strategy; if there is an even number of self-involutive edges, Player~2 has the winning strategy. Both require the winning players to use the modified mirror-reverse strategy. In the above game with three self-involutive edges it was necessary for Player~1 to mark one of them so that the remaining number of self-involutive edges would be even, which then allows Player~1 to implement the modified mirror-reverse strategy with their remaining moves. If the above graph was altered to have a cycle $C_8$ instead of the cycle $C_9$ then there would only be  two self-involutive edges and Player~2 could immediately respond to all of Player~1's moves using the mirror-reverse strategy. Figure~\ref{fig:c578played} shows a similar sample game to the one in Figure~\ref{fig:c579played}, where a cycle $C_8$ is used instead of the cycle $C_9$.

\begin{figure}[H]
\centering
\begin{tikzpicture}[scale=.6]
			 \begin{scope}[very thick, every node/.style={sloped,allow upside down}]
		     \Bvertex (A) at (0.9,-1){};
			 \Bvertex (C) at (2.5,0.7){};
			 \Bvertex (D) at (3.8,2.2){};
			 \Bvertex (E) at (2.3,3.8){};
			 \Bvertex (F) at (.75,5){};
			 \Bvertex (G) at (-.7,3.8){};
			 \Bvertex (H) at (-2,2.2){};
			 \Bvertex (I) at (-0.8,0.7){};
		     \Bvertex (J) at (-3.2,3.5){};
			 \Bvertex (K) at (-5,3.5){};
			 \Bvertex (L) at (-5,0.5){};
			 \Bvertex (M) at (-3.2,0.5){};
			 \Bvertex (N) at (5.3,3.5){};
			 \Bvertex (S) at (5.3,.5){};
			 \Bvertex (O) at (6.8,3.5){};
			 \Bvertex (R) at (6.8,0.5){};
			 \Bvertex (P) at (8.5,3.5){};
			 \Bvertex (Q) at (8.5,0.5){};
			 \draw (A) to (C);
			 \draw (C) to (D); 
			 \draw (D) to (E);
			 \draw (E) to (F);
			 \draw (F) to (G); 
			 \draw (G) to (H); 
			 \draw (H) to (I);
			 \draw (I) to (A);
			 \draw (H) to (J); 
			 \draw (J) to (K); 
			 \draw (K) to (L); 
			 \draw (L) to (M);
			 \draw (M) to (H);
			 \draw (D) to (N); 
			 \draw (N) to (O);
			 \draw (O) to (P);
			 \draw (P) to (Q); 
			 \draw (Q) to (R); 
			 \draw (R) to (S);
			 \draw (S) to (D);
			 \draw[dashed] (.75,5) --(0.9,-1){};
			 \draw[dashed] (-1.8, 2.2) --(-5,2.2){};
			 \draw[dashed] (3.5,2.2) --(8.5,2.2){};
			 \draw (F) -- node[font=\small, label={[label distance=-5pt] below right:$1$} ] {\midarrow} (G);
			 \draw (E) -- node[font=\small, label={[label distance=-5pt]below left:$2$} ] {\midarrow} (F);
			 \draw (G) -- node[font=\small, label={[label distance=-7pt]:$3$} ] {\midarrow} (H);
			 \draw (D) -- node[font=\small, label={[label distance=-7pt]:$4$} ] {\midarrow} (E);
			 \draw (H) -- node[font=\small, label={[label distance=-6pt]:$5$} ] {\midarrow} (I);
			 \draw (C) -- node[font=\small, label={[label distance=-7pt]:$6$} ] {\midarrow} (D);
			 \draw (J) -- node[font=\small, label={[label distance=-7pt]:$7$} ] {\midarrow} (H);
			 \draw (H) -- node[font=\small, label={[label distance=-7pt]:$8$} ] {\midarrow} (M);
			 \draw (K) -- node[font=\small, label={[label distance=-5pt]:$9$} ] {\midarrow} (J);
			 \draw (P) -- node[font=\small, label={[label distance=-5pt]:$10$} ] {\midarrow} (Q);
			 \draw (O) -- node[font=\small, label={[label distance=-3pt]below:$11$} ] {\midarrow} (N);
			 \draw (S) -- node[font=\small, label={[label distance=-5pt]below:$12$} ] {\midarrow} (R);
			 \draw (N) -- node[font=\small, label={[label distance=-7pt]below:$13$} ] {\midarrow} (D);
			 \draw (D) -- node[font=\small, label={[label distance=-7pt]below:$14$} ] {\midarrow} (S);
			 \draw (A) -- node[font=\small, label={[label distance=-7pt]below:$15$} ] {\midarrow} (C);
			 \draw (I) -- node[font=\small, label={[label distance=-7pt]below:$16$} ] {\midarrow} (A);
			 
			 \end{scope}
\end{tikzpicture}

\caption{Completed game on the game board with an even number of self-involutive edges.}
\label{fig:c578played}
\end{figure}
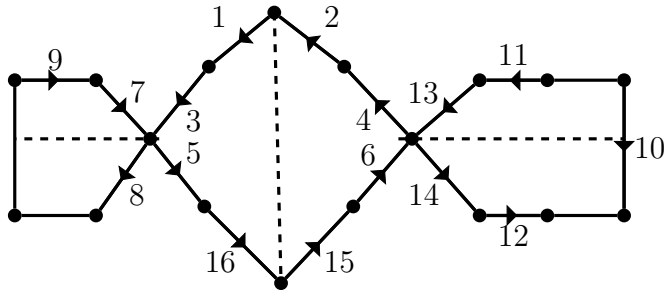

\subsection{Generalizing to cactus graphs with four or more cycles}
Now that we have shown how the modified mirror-reverse strategy works on cactus graphs consisting of  two cycles and three cycles, a natural question to ask is: Can the modified mirror-reverse strategy be applied to all triangle-free cactus graphs? 
Unfortunately, the answer is no.  Consider the example game shown in Figure~\ref{fig:c5976played} below.  In the example game we have labeled the problematic vertex ``$a$."

\begin{figure}[H]
\centering
\begin{tikzpicture}[scale=.6]
			 \begin{scope}[very thick, every node/.style={sloped,allow upside down}]
			 \Bvertex (A) at (0,-1.2){};
			 \Bvertex (B) at (1.8,-1.2){};
			 \Bvertex (C) at (2.5,0){};
			 \Bvertex (D) at (3.5,2.2)[label=above:$a$]{};
			 \Bvertex (E) at (2.3,3.9){};
			 \Bvertex (F) at (.75,4.9){};
			 \Bvertex (G) at (-.65,3.9){};
			 \Bvertex (H) at (-1.85,2.2){};
			 \Bvertex (I) at (-.9,0){};
		     \Bvertex (J) at (-2.7,3.2){};
			 \Bvertex (K) at (-4,3.2){};
			 \Bvertex (L) at (-4,1.2){};
			 \Bvertex (M) at (-2.7,1.2){};
			 \Bvertex (N) at (4.8,3.9){};
			 \Bvertex (O) at (6.4,3.9){};
			 \Bvertex (P) at (7.8,3.9){};
			 \Bvertex (Q) at (7.8,0.2){};
			 \Bvertex (R) at (6.4,0.2){};
			 \Bvertex (S) at (4.5,0.2){};
			 \Bvertex (T) at (5.4,-1){};
			 \Bvertex (U) at (5.4,-2.5){};
			 \Bvertex (W) at (3.6,-2.5){};
			 \Bvertex (X) at (3.6,-1){};
			 \draw (A) to (B);
			 \draw (B) to (C);
			 \draw (C) to (D); 
			 \draw (D) to (E);
			 \draw (E) to (F);
			 \draw (F) to (G); 
			 \draw (G) to (H); 
			 \draw (H) to (I);
			 \draw (I) to (A);
			 \draw (H) to (J); 
			 \draw (J) to (K); 
			 \draw (K) to (L); 
			 \draw (L) to (M);
			 \draw (M) to (H);
			 \draw (D) to (N); 
			 \draw (N) to (O);
			 \draw (O) to (P);
			 \draw (P) to (Q); 
			 \draw (Q) to (R); 
			 \draw (R) to (S);
			 \draw (S) to (D);
			 \draw (S) to (T);
			 \draw (T) to (U); 
			 \draw (U) to (W);
			 \draw (W) to (X); 
			 \draw (X) to (S); 
			 \draw[dashed] (.75,4.9) --(0.9,-1.05){};
			 \draw[dashed] (-1.6, 2.2) --(-4,2.2){};
			 \draw[dashed] (4.1,1.25)--(7.6,3.7){};
			 \draw[dashed] (4.5,.5) --(4.5,-2.6){};
			 \draw (S) -- node[font=\small, label={[label distance=-7pt]:$1$} ] {\midarrow} (D);
			 \draw (B) -- node[font=\small, label={[label distance=-2pt]:$2$} ] {\midarrow} (A);
			 \draw (N) -- node[font=\small, label={[label distance=-5pt]:$3$} ] {\midarrow} (D);
			 \draw (S) -- node[font=\small, label={[label distance=-4pt]:$4$} ] {\midarrow} (R);
			 \draw (C) -- node[font=\small, label={[label distance=-6pt]:$5$} ] {\midarrow} (D);
			 \draw (H) -- node[font=\small, label={[label distance=-6pt]:$6$} ] {\midarrow} (I);
			 \draw (H) -- node[font=\small, label={[label distance=-6pt]below:$7$} ] {\midarrow} (G);
			 \draw (E) -- node[font=\small, label={[label distance=-6pt]below:$8$} ] {\midarrow} (D);
			 \end{scope}
			 
\end{tikzpicture}

\caption{Example of a game where the modified mirror reverse strategy fails.}
    \label{fig:c5976played}
\end{figure}
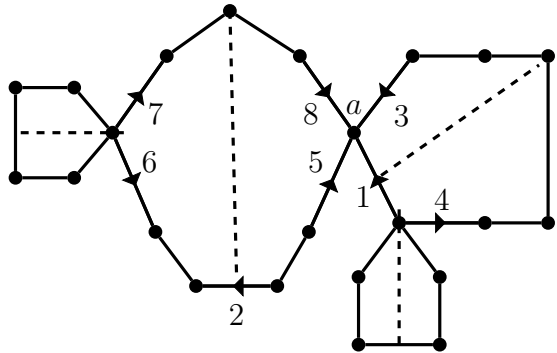

The even number of self-involutive edges suggests that Player~2 might win here by employing the modified mirror-reverse strategy. However, the strategy dictates an 8th move that would result in a sink at vertex $a$. Such a move is not permitted and thus our strategy cannot be employed. Note that this does not necessarily mean that Player~2 cannot win the game, only that they cannot win by employing the modified mirror-reverse strategy. 

Why does the modified mirror-reverse strategy fail in the case of four joined cycles but never in the case of two or three joined cycles? The sink/source issue is avoided in the case of the two or three joined cycles because each vertex of high degree has an axis of symmetry which passes through it. Thus, since each cycle is triangle-free, our strategy dictates that we can apply the mirror-reverse strategy to edges and their partner edges. Since applying the mirror-reverse strategy does not change the direction of any cycle, a sink or source is prevented at vertices of high degree. 
However in the case of four or more joined cycles in a cactus graph, it becomes possible for a vertex of high degree to not have any axis of symmetry passing through it, as seen at vertex $a$ in Figure~\ref{fig:c5976played}.

In Figure~\ref{fig:c59767} below, we resolve this issue by joining another cycle to our graph. The addition of the uppermost cycle $C_7$ shifts the axis of symmetry on the original cycle $C_7$ so that it now passes through vertex $a$.

\begin{figure}[H]
\centering
\begin{tikzpicture}[scale=.4]
			 \begin{scope}[very thick, every node/.style={sloped,allow upside down}]
			 \Bvertex (A) at (0,-1.2){};
			 \Bvertex (B) at (1.8,-1.2){};
			 \Bvertex (C) at (2.5,0){};
			 \Bvertex (D) at (3.5,2.2)[label=above:$a$]{};
			 \Bvertex (E) at (2.3,3.9){};
			 \Bvertex (F) at (.75,4.9){};
			 \Bvertex (G) at (-.65,3.9){};
			 \Bvertex (H) at (-1.85,2.2){};
			 \Bvertex (I) at (-.9,0){};
		     \Bvertex (J) at (-2.7,3.2){};
			 \Bvertex (K) at (-4,3.2){};
			 \Bvertex (L) at (-4,1.2){};
			 \Bvertex (M) at (-2.7,1.2){};
			 \Bvertex (N) at (4.8,3.9){};
			 \Bvertex (O) at (6.4,3.9){};
			 \Bvertex (P) at (7.8,3.9){};
			 \Bvertex (Q) at (7.8,0.2){};
			 \Bvertex (R) at (6.4,0.2){};
			 \Bvertex (S) at (4.5,0.2){};
			 \Bvertex (T) at (5.4,-1){};
			 \Bvertex (U) at (5.4,-2.5){};
			 \Bvertex (W) at (3.6,-2.5){};
			 \Bvertex (X) at (3.6,-1){};
			 \Bvertex (Y) at (6,4.8){};
			 \Bvertex (Z) at (6,6){};
			 \Bvertex (AA) at (6,7){};
			 \Bvertex (AB) at (3,7){};
			 \Bvertex (AC) at (3,6){};
			 \Bvertex (AD) at (3,4.8){};
			 \draw (A) to (B);
			 \draw (B) to (C);
			 \draw (C) to (D); 
			 \draw (D) to (E);
			 \draw (E) to (F);
			 \draw (F) to (G); 
			 \draw (G) to (H); 
			 \draw (H) to (I);
			 \draw (I) to (A);
			 \draw (H) to (J); 
			 \draw (J) to (K); 
			 \draw (K) to (L); 
			 \draw (L) to (M);
			 \draw (M) to (H);
			 \draw (D) to (N); 
			 \draw (N) to (O);
			 \draw (O) to (P);
			 \draw (P) to (Q); 
			 \draw (Q) to (R); 
			 \draw (R) to (S);
			 \draw (S) to (D);
			 \draw (S) to (T);
			 \draw (T) to (U); 
			 \draw (U) to (W);
			 \draw (W) to (X); 
			 \draw (X) to (S); 
			 \draw (N) to (Y); 
			 \draw (Y) to (Z); 
			 \draw (Z) to (AA);
			 \draw (AA) to (AB);
			 \draw (AB) to (AC);
			 \draw (AC) to (AD); 
			 \draw (AD) to (N);
			 \draw[dashed] (.75,4.9) --(0.9,-1.05){};
			 \draw[dashed] (-1.6, 2.2) --(-4,2.2){};
			 \draw[dashed] (3.5,2.2)--(7.8,2.2){};
			 \draw[dashed] (4.5,.5) --(4.5,-3){};
			 \draw[dashed] (4.7,3.9) --(4.7,7.2){};
			 \end{scope}
\end{tikzpicture}

\caption{The game board in Figure~\ref{fig:c5976played} modified to ensure that every degree 4 vertex has an axis of symmetry passing through it.}
\label{fig:c59767}
\end{figure}
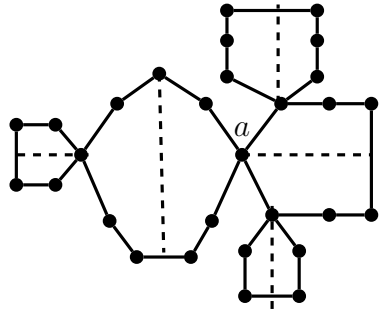

Although the modified mirror-reverse strategy cannot be successfully applied to every triangle-free cactus graph in which every edge belongs to exactly one cycle, it can be applied to any such graph in which every cycle has an axis of reflective symmetry such that
\begin{itemize}
    \item symmetry partners vertices of degree 2 with one another and vertices of high degree with other vertices of high degree (possibly themselves), and
    \item every vertex of high degree has at least one axis of symmetry passing through it.
\end{itemize}
There are many types of graphs that satisfy these criteria. Some examples are shown in Figure~\ref{fig:exampleboards}.

 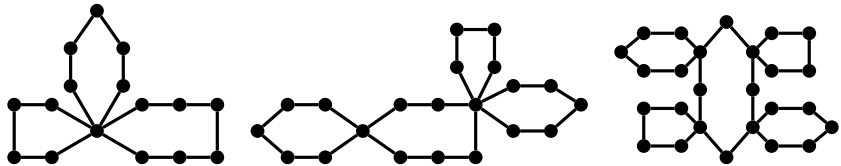
\begin{figure}[H]\center
\[
\begin{array}{ccc}	   
\begin{tikzpicture}[scale=.5]
\begin{scope}[very thick, every node/.style={sloped,allow upside down}]
    \Bvertex (A) at (0,0){};
	\Bvertex (B) at (-1.2,-.7){};
	\Bvertex (C) at (-2.2, -.7){};
	\Bvertex (D) at (-2.2, .7){};
    \Bvertex (E) at (-1.2, .7){};
	\Bvertex (F) at (-.7, 1.2){};
	\Bvertex (G) at (-.7, 2.2){};
	\Bvertex (H) at (0, 3.2){};
	\Bvertex (I) at (.7, 2.2){};
    \Bvertex (J) at (.7,1.2){};
	\Bvertex (K) at (1.2,.7){};
	\Bvertex (L) at (2.2,.7){};
	\Bvertex (M) at (3.2,.7){};
	\Bvertex (N) at (3.2,-.7){};
	\Bvertex (O) at (2.2,-.7){};
    \Bvertex (P) at (1.2,-.7){};
    \draw (A) to (B);
    \draw (B) to (C);
    \draw (C) to (D);
    \draw (D) to (E);
    \draw (E) to (A);
    \draw (A) to (F);
    \draw (F) to (G);
    \draw (G) to (H);
    \draw (H) to (I);
    \draw (I) to (J);
    \draw (J) to (A);
    \draw (A) to (K);
    \draw (K) to (L);
    \draw (L) to (M);
    \draw (M) to (N);
    \draw (N) to (O);
    \draw (O) to (P);
    \draw (P) to (A);
\end{scope}
\end{tikzpicture}
&

\begin{tikzpicture}[scale=.5]
\begin{scope}[very thick, every node/.style={sloped,allow upside down}]
    \Bvertex (A) at (0,0){};
	\Bvertex (B) at (-1, .7){};
	\Bvertex (C) at (-2, .7){};
	\Bvertex (D) at (-2.8, 0){};
    \Bvertex (E) at (-2, -.7){};
	\Bvertex (F) at (-1, -.7){};
	\Bvertex (G) at (1, -.7){};
	\Bvertex (H) at (2, -.7){};
	\Bvertex (I) at (3, -.7){};
    \Bvertex (J) at (3, .7){};
	\Bvertex (K) at (2,.7){};
	\Bvertex (L) at (1,.7){};
	\Bvertex (M) at (2.5,1.7){};
	\Bvertex (N) at (2.5, 2.7){};
	\Bvertex (O) at (3.5, 2.7){};
    \Bvertex (P) at (3.5, 1.7){};
    \Bvertex (Q) at (4, 1.2){};
	\Bvertex (R) at (5, 1.2){};
	\Bvertex (S) at (5.8, .7){};
	\Bvertex (T) at (5, 0){};
	\Bvertex (U) at (4, 0){};
	\draw (A) to (B);
    \draw (B) to (C);
    \draw (C) to (D);
    \draw (D) to (E);
    \draw (E) to (F);
    \draw (F) to (A);
    \draw (A) to (G);
    \draw (G) to (H);
    \draw (H) to (I);
    \draw (I) to (J);
    \draw (J) to (K);
    \draw (K) to (L);
    \draw (L) to (A);
    \draw (J) to (M);
    \draw (M) to (N);
    \draw (N) to (O);
    \draw (O) to (P);
    \draw (P) to (J);
    \draw (J) to (Q);
    \draw (Q) to (R);
    \draw (R) to (S);
    \draw (S) to (T);
    \draw (T) to (U);
    \draw (U) to (J);
\end{scope}
\end{tikzpicture}  
&

 \begin{tikzpicture}[scale=.5]
\begin{scope}[very thick, every node/.style={sloped,allow upside down}]
    \Bvertex (A) at (0,1.8){};
	\Bvertex (B) at (.7,1){};
	\Bvertex (C) at (.7, 0){};
	\Bvertex (D) at (.7, -1){};
    \Bvertex (E) at (0,-1.8){};
	\Bvertex (F) at (-.7, -1){};
	\Bvertex (G) at (-.7, 0){};
	\Bvertex (H) at (-.7, 1){};
	\Bvertex (I) at (-1.2, 1.5){};
    \Bvertex (J) at (-2.2, 1.5){};
	\Bvertex (K) at (-2.8, 1){};
	\Bvertex (L) at (-2.2, .5){};
	\Bvertex (M) at (-1.2, .5){};
	\Bvertex (N) at (-1.2, -.5){};
	\Bvertex (O) at (-2.2, -.5){};
    \Bvertex (P) at (-2.2, -1.5){};
    \Bvertex (Q) at (-1.2, -1.5){};
	\Bvertex (R) at (1.2, -1.5){};
	\Bvertex (S) at (2.2, -1.5){};
	\Bvertex (T) at (2.8, -1){};
	\Bvertex (U) at (2.2, -.5){};
	\Bvertex (V) at (1.2, -.5){};
	\Bvertex (W) at (1.2, .5){};
	\Bvertex (X) at (2.2, .5){};
	\Bvertex (Y) at (2.2, 1.5){};
	\Bvertex (Z) at (1.2, 1.5){};
	\draw (A) to (B);
    \draw (B) to (C);
    \draw (C) to (D);
    \draw (D) to (E);
    \draw (E) to (F);
    \draw (F) to (G);
    \draw (G) to (H);
    \draw (H) to (A);
    \draw (H) to (I);
    \draw (I) to (J);
    \draw (J) to (K);
    \draw (K) to (L);
    \draw (L) to (M);
    \draw (M) to (H);
    \draw (F) to (N);
    \draw (N) to (O);
    \draw (O) to (P);
    \draw (P) to (Q);
    \draw (Q) to (F);
    \draw (D) to (R);
    \draw (R) to (S);
    \draw (S) to (T);
    \draw (T) to (U);
    \draw (U) to (V);
    \draw (V) to (D);
    \draw (B) to (W);
    \draw (W) to (X);
    \draw (X) to (Y);
    \draw (Y) to (Z);
    \draw (Z) to (B);
\end{scope}
\end{tikzpicture}\\

\end{array}
\]
\caption{Examples of game boards that satisfy the criteria needed to use the modified mirror-reverse strategy. }\label{fig:exampleboards}
\end{figure}

\section{Main result for cactus graphs}\label{mainresultsection}
In this section, we state and prove our general result for triangle-free cactus graphs.

\begin{theorem}\label{mainresult}
Let $G$ be a triangle-free cactus graph in which every edge belongs to exactly one cycle. Assume that $G$ has a fixed set of symmetries satisfying the following properties:
\begin{enumerate}
\item Each cycle has an axis of reflective symmetry under which every vertex of degree at least 4 is partnered via symmetry with another vertex of degree at least 4 (possibly itself).

\item For each vertex of degree at least 4 there is at least one of these axes of symmetry which passes through it.
\end{enumerate}

If the total number of self-involutive edges in $G$ is even, then Player~2 can win by employing the modified mirror-reverse strategy. If the total number of self-involutive edges in $G$ is odd, then Player~1 can win by first marking a self-involutive edge and then employing the modified mirror-reverse strategy.
\end{theorem}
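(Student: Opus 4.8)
The plan is to show that the modified mirror-reverse strategy is well-defined (i.e. never dictates an illegal move for the player employing it) and that it wins. Throughout I would argue from the perspective of the responding player, whom I will call Player~$R$: this is Player~2 when the number of self-involutive edges is even, and Player~1 after their opening move on a self-involutive edge when that number is odd. In either case, after $R$'s potential opening move, there is an even number of self-involutive edges remaining, and the non-self-involutive edges are partitioned into partner pairs $\{e,e'\}$, no two members of which lie in a common cell (this last fact uses the triangle-free hypothesis together with property~(1): an edge and its partner in the same cycle can only coexist if the cycle has length~$3$). I would first record these structural observations as a lemma, and also observe that because each cycle has odd length exactly when it contains a self-involutive edge, the total edge count of $G$ has the same parity as the number of self-involutive edges, so $R$ really is positioned to make the last move by a parity count --- this is what lets us conclude a win even when no cycle cell is ever completed.

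Next I would prove the key invariant, maintained after each of $R$'s responses: (a) the set of marked edges is symmetric under the chosen symmetries restricted to the already-played pairs, with partner edges marked in mirror-reversed directions, except possibly for an even number of already-played self-involutive edges and at most one ``unmatched'' partner pair in a transient state that $R$ immediately corrects; (b) no cell is a cycle cell; and (c) no vertex is a sink or source. The heart of the argument is that $R$'s prescribed move is always legal. For case~3 of the strategy (mirroring a non-self-involutive move on $e$ by playing $e'$ reversed), the triangle-free condition guarantees $e$ and $e'$ share no cell, so playing $e'$ cannot complete a cycle cell for $R$ unintentionally, and — crucially — it cannot create a sink or source: the only delicate vertices are the high-degree ones, and by property~(2) every high-degree vertex $v$ lies on an axis of symmetry, which maps $v$ to itself and permutes its incident edges in partner pairs (or fixes an edge through $v$); since the pre-move configuration at $v$ is mirror-symmetric and the opponent's move at $e$ did not make $v$ a sink/source, the mirrored move at $e'$ restores symmetry at $v$ and hence cannot complete an all-in or all-out configuration. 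Degree-2 vertices are handled by the observation that mirroring never reverses the ``flow'' through a cell. When the mirror move would be a death move, the strategy instead plays an available self-involutive edge; here I must show such an edge is still available and that playing it is legal — availability follows from a parity bookkeeping on self-involutive edges (opponent death moves and $R$'s self-involutive responses are what consume them, and they are consumed two at a time relative to $R$'s turns), and legality follows because a self-involutive edge's two endpoints are each fixed or swapped with symmetry acting on the rest of their incident edges symmetrically, so at least one direction avoids a sink/source, and it cannot be a death move since its cell's other edges are not yet all marked (else the opponent would have just completed or been forced into that cell).

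Finally I would close the game-theoretic loop: since the invariant says no cycle cell ever appears on $R$'s turn, and $R$ always has a legal reply, the game cannot end on the opponent's completing a cycle cell against $R$; and when the opponent completes a cycle cell, $R$'s rule~1 takes precedence — so in fact I must phrase the invariant as: before the opponent's move there is no cycle cell and the board is symmetric, hence if the opponent's move completes a cycle cell it does so on a non-self-involutive edge, whose partner move would then let $R$ complete the mirrored cycle cell and win immediately; if the opponent's move does not complete a cycle cell, $R$ proceeds by rule~2 or~3 as above. Either way $R$ never loses, and since the game is finite and drawless (Zermelo, as cited), $R$ wins.

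The main obstacle I anticipate is the legality of $R$'s move at high-degree vertices in the ``death move forces a self-involutive response'' branch, because there the symmetry of the board is temporarily broken by the opponent's last move, and one must carefully track that the self-involutive edge chosen does not itself sit at a high-degree vertex that the broken symmetry has pushed to the brink of being a sink or source; handling this cleanly will likely require strengthening the invariant to say that immediately before each of $R$'s moves, the only asymmetry is the single most recent opponent edge, and that this edge lies in a cell none of whose other edges are marked in a way that threatens $R$.
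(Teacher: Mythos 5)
Your overall plan coincides with the paper's: show that the modified mirror-reverse strategy always has its dictated move available, that it never creates a sink or source (using Property~2 at high-degree vertices and Property~1 at degree-2 vertices), and that it never commits a death move, whence the opponent can neither complete a cycle cell nor make the last move. However, your opening structural lemma is false, and it matters for how you think about legality. The symmetries here are local reflections of the individual cycles, so every cell is \emph{self}-involutive: a non-self-involutive edge and its partner always lie in the same cell (any two mirror-image edges of a $C_5$, say), regardless of triangle-freeness; the triangle-free hypothesis only prevents an edge from sharing \emph{both} endpoints with its partner. What actually makes mirroring inside a shared cell harmless is that reflecting an arrow and then reversing it preserves the rotational (clockwise/counterclockwise) direction around that cell --- this is the fact the paper leans on, both because a mirrored move that completes a cycle cell is simply a win (rule~1 fires first), and because it forces the two non-self-involutive edges adjacent to a self-involutive edge to carry the same rotational direction, which is exactly what lets the responder choose an orientation of a self-involutive edge that avoids a sink/source at its endpoints.

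The second, more serious gap is at the heart of the argument: you assert, but do not prove, that when the mirror-reverse would be a death move a self-involutive edge is still available, and your claim that the self-involutive response ``cannot be a death move since its cell's other edges are not yet all marked'' misstates what a death move is (it is the penultimate arrow of a potential cycle cell, so unmarked edges elsewhere in the cell do not preclude it). The paper closes this with a within-cycle parity argument you would need: a forced deviation can only occur in a cycle whose single self-involutive edge is still unmarked, because in a cycle whose unmarked edges all come in mirror pairs the second-to-last edge is, by parity, marked by the opponent; consequently self-involutive edges become currently unplayable (marked or dead) in pairs, with the responder always moving second in each pair. This simultaneously gives availability (the opponent can never exhaust the even supply of playable self-involutive edges) and the no-death-move claim (if the responder's self-involutive edge were dead, its paired edge was already unplayable, or the opponent has just made a death move and rule~1 wins). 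Your anticipated strengthening of the invariant is the right instinct, but without this pairing/parity argument the proposal does not yet establish the two steps on which the whole strategy rests. (A minor further point: parity of the total edge count does not by itself give the last move, since games can end with unmarkable edges left unmarked; the correct conclusion, which you also state, is that the responder always has a reply.)
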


\begin{proof}
Note that in the case when the number of self-involutive edges is odd, after marking the first self-involutive edge Player~1 essentially functions as Player~2 would in the case in which there are an even number of self-involutive edges. An example of such a game is shown in Figure~\ref{fig:c579played}. Thus, it suffices to consider only one case. In the style of Alvarado et al.~\cite{Alvarado}, we will call the player with the winning strategy Player W. Here, Player W will be using the modified mirror-reverse strategy to respond to the moves of their opponent, who we will call Player X. If $ij$ is the edge with endpoints $i$ and $j$, playing $i\rightarrow j$ means that a player will mark an arrow from $i$ to $j$. Let $i'$ and $j'$ denote the partner vertices of $i$ and $j$ respectively under the symmetry. Under the mirror-reverse strategy Player~W would complete a cycle if possible, but otherwise would look at the edge  Player~X played on the previous move ($ i \rightarrow j$) and play the \emph{mirror-reverse} ($j'\rightarrow i'$).

We can assume that the current number of unmarked self-involutive edges is even. Since there is an even number of self-involutive edges, we can think of them as being ``paired'' by the strategy even before gameplay begins. That is, when Player X plays on a certain self-involutive edge Player W responds by playing on its paired self-involutive edge. Such a response is demonstrated in moves 1 and 2 of the game played in Figure~\ref{fig:c5976played}. And when Player X plays on a non-self-involutive edge but Player W cannot respond by playing its mirror-reverse due to it being a death move, Player W responds by playing on the self-involutive edge that is paired with the self-involutive edge belonging to the cycle in which the death move would have occurred.  Such a response is demonstrated in moves 9 and 10 of the game played in Figure~\ref{fig:c578played}.

To prove that Player W indeed has the winning strategy we must show that the move dictated by the strategy is always available (previously unmarked), will never lead to a sink or source for Player W, and will never lead to a death move for Player W. Thus since Player X can never complete a cycle or make the final move in the game, Player W wins.

First we will show that the move dictated by the modified mirror-reverse strategy is always available for Player W. If the strategy dictates that Player W play on an edge that is not self-involutive, then Player X must have just played on its mirror-reverse edge. Clearly the edge is available for Player W to mark because had this edge been marked previously, either by Player X or Player W, it's mirror-reverse would have already been marked as well and hence unavailable for Player X to be marking now. 

The case in which the strategy dictates that Player W mark a self-involutive edge is more complicated. This could occur because Player X marked another self-involutive edge or because Player X marked an edge which is not self-involutive, but whose mirror-reverse would be a death move. (Again, examples of each of these situations can be seen in Figures~\ref{fig:c5976played} and \ref{fig:c578played} respectively.) Note that this second case could only occur on a cycle with exactly one unmarked self-involutive edge.
This is because a death move on a cycle only occurs on the second-to-last unmarked edge. If the cycle were even with either two or zero unmarked self-involutive edges, then since the remaining edges are marked in pairs via mirror-reverse, parity dictates that the second-to-last move would belong to Player X. The same is true for an odd cycle in which the single self-involutive edge had already been marked.

Thus, if Player W cannot play a mirror-reverse of a non-self-involutive edge due to it being a death move, then one of the two remaining unmarked edges must have been self-involutive. Marking this self-involutive edge would now be a death move itself, so even though it is unmarked this edge is now considered currently unplayable.  We observe this in the cycle $C_5$ of the example game in Figure~\ref{fig:c578played} when Player~2 must use the strategy to respond to the 9th move of the game. So we make the important observation that the modified mirror-reverse strategy causes self-involutive edges to become currently unplayable (either by being marked or becoming death moves) in pairs, with Player W playing immediately after Player X. 
That is, whenever the strategy dictates that Player W play on a self-involutive edge, it must be the case that Player X just played in such a way to make its paired edge no longer currently playable. 
Thus, since there are an even number of self-involutive edges, it cannot be Player X that marks the last playable self-involutive edge. So Player W's move is always available.

Next we will show that the modified mirror-reverse strategy will never dictate that Player W play in a way that will create a sink or source at a vertex. For high degree vertices, a sink/source is prevented by Property 2 of graph $G$ described in the theorem. Since such a vertex must have an axis of reflective symmetry passing through it and $G$ is triangle-free, this vertex would be incident to a pair of edges being marked via mirror-reverse. Thus, if one edge enters the vertex its partner exits the vertex and this prevents a sink/source at the vertex.  For example, consider the degree 4 vertices of the game board shown in Figure~\ref{fig:c578played}. Moves 7 and 8 and moves 13 and 14, respectively, demonstrate how the strategy prevents a sink/source at high degree vertices. Let us instead consider vertices of degree 2. In the case that the two incident edges are both not self-involutive, they will be marked by Player W only by employing the mirror-reverse strategy. By Property 1 of graph $G$, degree 2 vertices are paired via symmetry with other degree 2 vertices, thus a sink/source cannot be created by Player W without a sink/source having just been created by Player X at the partner vertex. In the case in which one of the incident edges is self-involutive, note that this self-involutive edge must be incident to two other edges which are not self-involutive and are in fact paired with one another under symmetry. Thus, mirror-reversing ensures that these edges can only be marked with the same direction (clockwise or counterclockwise). Since there are no rules about the direction which Player W must mark a self-involutive edge, Player W can choose the direction which avoids a sink/source incident to a self-involutive edge. Note that because of the symmetry and mirror-reverse play, the edge cannot be unmarkable.

Finally, we will show that the modified mirror-reverse strategy can never lead to Player W making a death move. Since the strategy dictates that Player W only mark the mirror-reverse when it is not a death move, we need only consider the case in which Player W marks a self-involutive edge. However, as noted before, self-involutive edges become currently unplayable in pairs, with Player W playing second. So if a self-involutive edge were a death move for Player W, then it was already currently unplayable and hence its partner edge would have also been currently unplayable---that is, it was either already marked, and hence unable to be played by Player X, or it was a death move for Player X, in which case the strategy would dictate that Player W complete the cycle and win.
\end{proof}

The careful reader will observe that although the above theorem requires that we fix a set of symmetries on our graph $G$, it is possible that multiple sets of such symmetries exist. We note that our choice of symmetries does not matter, as long as Properties 1 and 2 of the theorem hold.  Since every odd cycle will have an axis of symmetry with exactly one self-involutive edge and every even cycle will have an axis of symmetry with either two or zero self-involutive edges, changing the set of symmetries on $G$ cannot affect the parity of self-involutive edges. So although the moves dictated by the strategy may differ, the winning player is fixed regardless of the set of symmetries used.

\section{Cactus graphs that are not triangle-free}\label{trianglecactus}
Recall that in Theorem~\ref{mainresult} we restricted our cactus graphs to be triangle-free. A natural question regarding this restriction is: Is there a winning strategy for any player if the cactus graph is not triangle-free? We answer this question for cactus graphs in which two cycles are joined together by a vertex with one cycle being a $C_3$ graph in the following result. This result also provides an example in which there is an even number of edges in the board and Player~1 has a winning strategy, thus answering the question of Alvarado et al.~\cite{Alvarado} mentioned at the start of Section~\ref{cactusgraph}.

\begin{theorem}\label{triangletheorem}
Let $G$ be a board containing two cycles, $C_3$ and $C_n$, with $n\geq4$, such that $C_3$ and $C_n$ are connected by a single vertex, denoted vertex $a$, and every edge belongs to exactly one cycle. Player~1 has a winning strategy on $G$.
\end{theorem}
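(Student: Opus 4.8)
The plan is to reduce this to a near-parity argument by having Player~1 play locally and decisively on the triangle $C_3$, then fall back on the symmetry machinery of Theorem~\ref{mainresult} for the remainder $C_n$. Write the triangle as $a, b, c$ with edges $ab$, $bc$, $ca$; the vertex $a$ has degree $4$ (it is the join vertex), while $b$ and $c$ have degree $2$. A key structural observation is that the cell bounded by the triangle is a potential cycle cell with only three edges, so Player~1 wants either to complete that cell directly or to force the game into a configuration whose parity favors them. First I would have Player~1's opening move be an arrow on one triangle edge, say $b \rightarrow c$. Because $b$ and $c$ have degree $2$, the sink--source rule forbids ever reversing an arrow incident to them, so after this move the triangle can only be completed as the directed cycle $b \rightarrow c \rightarrow a \rightarrow b$; equivalently the three triangle edges are now ``oriented'' toward a single possible cycle, and whoever marks the last of them (if it is marked before it becomes a death move) wins. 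So after move 1, the edges $ca$ and $ab$ behave like a forced ``mini-game'': marking $c \rightarrow a$ threatens the cycle (it would make $ab$ a death move), and symmetrically for $ab$.

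Next I would analyze what happens on the $C_n$ side. Deleting the triangle from $G$ leaves the single cycle $C_n$, which on its own is governed by Theorem~\ref{cyclen}: Player~1 wins $C_n$ for $n$ odd, Player~2 for $n$ even. The idea is that Player~1's first move on the triangle effectively changes the parity of the ``rest of the game,'' so that whichever parity class $n$ falls in, Player~1 can still come out ahead. Concretely, I would argue: after Player~1 plays $b\rightarrow c$, if Player~2 ever plays on the triangle (either $ca$ or $ab$), that move must point the edge in the cycle-consistent direction (degree-2 constraints again), and then Player~1 immediately plays the final triangle edge and wins by completing the cycle cell — unless that final edge is already a death move, but a death move being available to Player~1 means Player~1 completes the cycle, which is the same outcome. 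Hence Player~2 can never safely touch the triangle after move 1; the triangle edges $ca$, $ab$ become edges that only Player~1 will ever want to (and can safely) play, and in fact they stay currently playable as a reserve. This gives Player~1 two ``spare'' tempo moves that Player~2 cannot use.

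The main engine of the proof is then: Player~1 plays $b\rightarrow c$, and thereafter uses the modified mirror-reverse strategy on the $C_n$ part (viewing $C_n$ with its axis of reflective symmetry through $a$ and through the opposite vertex or edge), treating the two remaining triangle edges as self-involutive-like reserves to spend whenever the strategy would otherwise force a death move or when Player~1 needs to pass a tempo back. When $n$ is odd there is one self-involutive edge in $C_n$; together with the opening triangle move this is the odd-count situation of Theorem~\ref{mainresult}, so Player~1 wins. When $n$ is even there are zero or two self-involutive edges in $C_n$, and the opening triangle move plus the two remaining triangle edges supply the extra tempi needed to flip parity in Player~1's favor; here one checks directly (the cycle-board lemma about unmarkable edges, Lemma stated after Theorem~\ref{cyclen}, together with the forced-win threat on the triangle) that Player~2 runs out of moves first. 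The hard part — and the step I expect to be the real obstacle — is the careful bookkeeping showing that the two leftover triangle edges genuinely function as usable ``pass'' moves for Player~1 without ever becoming a death move for Player~1 or an option for Player~2: one must verify that marking $c\rightarrow a$ (resp. $a\rightarrow b$) never creates a source at $a$ (it cannot, since $a$ also has the two $C_n$-edges, which mirror-reverse keeps balanced) and that if both leftover triangle edges were ever simultaneously death moves, Player~1 could have completed the triangle cycle earlier. I would organize the endgame analysis by cases on $n \bmod 2$ and on whether Player~2 ever plays a triangle edge, pushing all the degree-$2$ forced-direction reasoning to the front so the remaining casework mirrors the proof of Theorem~\ref{mainresult} as closely as possible.
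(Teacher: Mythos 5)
Your key mechanism fails: after the opening $b\to c$, the two remaining triangle edges are \emph{not} safe ``tempo'' or reserve moves for Player~1. Since $b$ and $c$ have degree~2, each of $ab$ and $ca$ has only one legal direction ($a\to b$ and $c\to a$), and marking either one is the penultimate arrow of the triangle cell --- a death move that lets the opponent complete the triangle on the next turn. So after move~1 these edges are currently unplayable for \emph{both} players; they cannot be ``spent'' by Player~1 to pass a turn or to dodge a death move elsewhere, and hence cannot play the role of the self-involutive reserves that the modified mirror-reverse strategy needs. (Nor can you lean on Theorem~\ref{mainresult}: its triangle-free hypothesis excludes this board, and indeed the whole point of Section~5 is that these boards break the parity pattern that theorem produces. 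Your sentence ``a death move being available to Player~1 means Player~1 completes the cycle'' also conflates a death move, which marks the penultimate edge and sets up the opponent, with the winning move itself.)

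This is not just a bookkeeping gap: for odd $n$ your opening actually loses. Take $C_3$ joined to $C_5$ at $a$, with the pentagon labelled $a,v_1,v_2,v_3,v_4$. After your $b\to c$, let Player~2 mark $v_2\to v_3$ (the pentagon edge opposite $a$) and thereafter mirror-reverse across the pentagon's axis through $a$. At move~3 the triangle edges are death moves, and up to the reflect-and-reverse symmetry of the position Player~1's legal pentagon options are $v_1\to v_2$, $a\to v_1$, and $v_1\to a$; after Player~2's mirror-reverse reply ($v_3\to v_4$, $v_4\to a$, $a\to v_4$ respectively) each remaining pentagon edge is either unmarkable (sink/source at a degree-2 vertex) or a death move, so at move~5 Player~1 must make the first death move and Player~2 completes a cycle at move~6. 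The paper's proof for odd $n$ therefore abandons symmetry entirely: Player~1 opens with a triangle edge \emph{incident to $a$} directed toward $a$, forces (in a short case analysis of Player~2's reply) the other triangle edge at $a$ to also point toward $a$, which makes $a$ an almost-sink and renders $bc$ unmarkable, and then wins by a parity count showing an odd number of edges get marked. Your opening $b\to c$ does coincide with the paper's choice in the even-$n$ case, where $bc$ is the unique self-involutive edge and Corollary~\ref{mirrorsymbasic} finishes the job with no reserves needed --- but for odd $n$ a genuinely different idea is required.
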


\begin{proof}
In the case where $n$ is even, $G$ has reflective symmetry by drawing the axis of symmetry to only intersect the degree 4 vertex as shown in Figure~\ref{fig:symgraph} Board $A$ and there is exactly one self-involutive edge whose vertices are not fixed by the reflection. Therefore Corollary~\ref{mirrorsymbasic} holds and Player~1 has a winning strategy by marking the self-involutive edge on their first move and then using the mirror-reverse strategy for subsequent moves. 

In the case where $n$ is odd, Player~1's strategy is to ensure that vertex $a$ is an almost-sink. Note that a similar strategy for creating an almost-source at vertex $a$ would also produce a win for Player~1.  On Player~1's first move, they will mark one of the edges on the cycle $C_3$ that is incident to vertex $a$ with an arrow towards vertex $a$. Player~1's second move is dependent upon the first move of Player~2. There are three possible moves which Player~2 could make that are not death moves. In what follows we describe the strategy for Player~1 in each of the three cases. For examples of game play illustrating these three cases we have provided sample games on the board consisting of a cycle $C_3$ and a cycle $C_5$ joined at a single vertex in Figure~\ref{fig:C3C5graph} below.

In case one, if Player~2's first move is on the cycle $C_3$ then they would play on the other edge incident to vertex $a$ in the direction towards vertex $a$ to avoid a death move. In this case, Player~1's second move would be to play on the cycle $C_n$ on one of the edges incident to vertex $a$ with an arrow towards vertex $a$. This creates an almost sink at vertex $a$.
 
In case two, if Player~2's first move is on the cycle $C_n$ and they mark an edge that is incident to any vertex within distance one of vertex $a$, then Player~1's second move should be to mark the edge paired with that one via symmetry in the direction opposite  Player~2's first move (that is, Player~1 should use the mirror-reverse strategy). Then on Player~1's third turn, they should ensure that the remaining edge incident vertex $a$ on the cycle $C_3$ is marked with an arrow towards vertex $a$ by either marking it or observing that Player~2 marked it on their second move, which did not result in a death move. Note that if an almost sink has not yet been created, Player~1 can do so on their fourth move.
 
In case three, if Player~2's first move is not incident to a vertex within distance one of vertex $a$, then Player~1's second move is to mark the remaining edge in the cycle $C_3$ that is incident to vertex $a$ with an arrow towards vertex $a$, which leaves the remaining unmarked edge in $C_3$ unmarkable.  After Player~2's second move, Player~1 should either observe that an almost-sink has been created at vertex $a$ or play an edge incident to vertex $a$ on the cycle $C_n$ to create one.

In all, this strategy ensures the creation of an unmarkable edge in the cycle $C_3$ and that the number of unmarkable edges on $C_n$ are even. Hence, there are an odd number of edges that can be marked on the board $G$ and this leads to a Player~1 win. 

\end{proof}

\begin{figure}[ht]
\[
\begin{array}{ccc}	
\begin{tikzpicture}[scale=.6]
		 \begin{scope}[very thick, every node/.style={sloped,allow upside down}]
			 \Bvertex (A) at (1.5,0){};
			 \Bvertex (B) at (-.5, 1.414){};
			 \Bvertex (E) at (-.5,-1.414){};
			 \Bvertex (F) at (3.2, 1.414){};
			 \Bvertex (H) at (5.414,1.414){};
			 \Bvertex (I) at (5.414,-1.414){};
			 \Bvertex (K) at (3.2,-1.414){};
			 \draw (A) to (B);
			 \draw (B) to (E);
			 \draw (E) to (A);
			 \draw (A) to (F);
			 \draw (F) to (H); 
			 \draw (H) to (I); 
			 \draw (I) to (K);	
			 \draw (K) to (A); 
			 \draw (B) -- node[font=\small, label={[label distance=-7pt]:$1$} ] {\midarrow} (A);
			 \draw (E) -- node[font=\small, label={[label distance=-9pt]below:$2$} ] {\midarrow} (A);
			 \draw (F) -- node[font=\small, label={[label distance=-9pt] below right:$3$} ] {\midarrow} (A);
			 \draw (I) -- node[font=\small, label={[label distance=-4pt]below:$4$} ] {\midarrow} (K);
			 \draw (H) -- node[font=\small, label={[label distance=-4pt]below:$5$} ] {\midarrow} (F);
			 \end{scope}
\end{tikzpicture}
&
\begin{tikzpicture}[scale=.6]
			 \begin{scope}[very thick, every node/.style={sloped,allow upside down}]
			 \Bvertex (A) at (1.5,0){};
			 \Bvertex (B) at (-.5, 1.414){};
			 \Bvertex (E) at (-.5,-1.414){};
			 \Bvertex (F) at (3.2, 1.414){};
			 \Bvertex (H) at (5.414,1.414){};
			 \Bvertex (I) at (5.414,-1.414){};
			 \Bvertex (K) at (3.2,-1.414){};
			 \draw (A) to (B);
			 \draw (B) to (E);
			 \draw (E) to (A);
			 \draw (A) to (F);
			 \draw (F) to (H); 
			 \draw (H) to (I); 
			 \draw (I) to (K);	
			 \draw (K) to (A); 
			 \draw (B) -- node[font=\small, label={[label distance=-7pt]:$1$} ] {\midarrow} (A);
			 \draw (A) -- node[font=\small, label={[label distance=-6pt]:$2$} ] {\midarrow} (F);
			 \draw (K) -- node[font=\small, label={[label distance=-9pt]:$3$} ] {\midarrow} (A);
			 \draw (H) -- node[font=\small, label={[label distance=-5pt]:$4$} ] {\midarrow} (I);
			 \draw (E) -- node[font=\small, label={[label distance=-9pt]below:$5$} ] {\midarrow} (A);
			 	 \draw (I) -- node[font=\small, label={[label distance=-4pt]below:$6$} ] {\midarrow} (K);
			 \draw (F) -- node[font=\small, label={[label distance=-4pt]above:$7$} ] {\midarrow} (H);
			 \end{scope}
\end{tikzpicture}
&
\begin{tikzpicture}[scale=.6]
			 \begin{scope}[very thick, every node/.style={sloped,allow upside down}]
			 \Bvertex (A) at (1.5,0){};
			 \Bvertex (B) at (-.5, 1.414){};
			 \Bvertex (E) at (-.5,-1.414){};
			 \Bvertex (F) at (3.2, 1.414){};
			 \Bvertex (H) at (5.414,1.414){};
			 \Bvertex (I) at (5.414,-1.414){};
			 \Bvertex (K) at (3.2,-1.414){};
			 \draw (A) to (B);
			 \draw (B) to (E);
			 \draw (E) to (A);
			 \draw (A) to (F);
			 \draw (F) to (H); 
			 \draw (H) to (I); 
			 \draw (I) to (K);	
			 \draw (K) to (A); 
			 \draw (B) -- node[font=\small, label={[label distance=-7pt]:$1$} ] {\midarrow} (A);
			 \draw (I) -- node[font=\small, label={[label distance=-4pt]below:$2$} ] {\midarrow} (H);
			 \draw (E) -- node[font=\small, label={[label distance=-7pt]below:$3$} ] {\midarrow} (A);
			 \draw (A) -- node[font=\small, label={[label distance=-4pt]:$4$} ] {\midarrow} (F);
			 \draw (K) -- node[font=\small, label={[label distance=-7pt]:$5$} ] {\midarrow} (A);
			 \end{scope}
\end{tikzpicture}\\
\text{Case } 1 & \text{Case }  2& \text{Case }  3\\
\end{array}
\]
\caption{An example showing of cycles $C_3$ and $C_5$ joined at a vertex displaying the three possible winning strategies for Player~1 in Theorem~\ref{triangletheorem}.}
\label{fig:C3C5graph}
\end{figure}
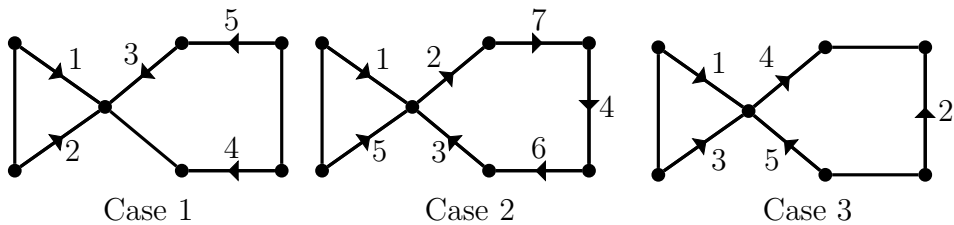

\section{Further Questions}
In this paper, we found a winning strategy for triangle-free cactus graphs with certain properties. However, there are questions that are unanswered from our studies: 
 
\begin{itemize}
    \item In our main result, we required our graphs to be triangle-free cactus graphs and to have a fixed set of symmetries that satisfied two properties (See Theorem~\ref{mainresult}). If we relax these criteria, can we determine a  strategy for cactus graphs without these properties? 
    \item The  modified mirror-reverse strategy was used for certain types of cactus graphs in our results.  Are there other classes of graphs that the modified mirror-reverse strategy can be applied to? 
    \item Can we utilized the modified mirror-reverse strategy on graphs with other types of symmetry (not just reflective symmetry)? 
    \item  We noted that our Theorem~\ref{triangletheorem} answers a question of Alvarado et at.~\cite{Alvarado} in the negative, by leading to examples of game boards with an even number of edges for which Player~1 possesses a winning strategy. However, such examples will all include a 3-cycle. We might ask the following variation of the question in \cite{Alvarado}: If we consider only triangle-free game boards, does it then follow that if the number of edges in the board is odd, Player 1 has a winning strategy, and otherwise Player 2 has a winning strategy?
    
\end{itemize}

\section{Acknowledgements}
We extend our gratitude to Jonah Amundsen and to Peter Graziano, doctoral student at the University of Connecticut, for their assistance with early aspects of the project.

The authors would like to thank the reviewers for their comments in reviewing the paper. We also thank the University of Wisconsin-Eau Claire Department of Mathematics, the Office of Research and Sponsored Projects for supporting Jonah Amundsen, Heather Baranek, and Shanise Walker on this project. In addition, we also thank the University of Wisconsin-Eau Claire Foundation, Walter M. Reid First Year Research Fellowship, and the Blugold Fellowship for supporting Heather Baranek. Most of the work for this project was completed while Samuel Adefiyiju and Alison LaBarre were students at Providence College.

\noindent {\bf Samuel Adefiyiju} is a Software Engineer at Raytheon Missiles and Defense who graduated Summa Cum Laude from Providence College with a  Bachelor's degree in Computer Science and a minor in Mathematics. His interests are software development, cybersecurity, algorithm optimization, and database management. While pursuing his undergraduate degree, Mr. Adefiyiju gained significant industry experience interning at the Rhode Island Department of Transportation, Ernst and Young, and Raytheon Missiles and Defense. Mr. Adefiyiju’s passion for both mathematics and computer science ultimately allowed him to find a home in the software development industry where he continues to contribute to providing high quality software to various clients.   \\

\noindent {\bf Heather Baranek} will receive her Bachelor's of Science in Software Engineering and Statistics from the University of Wisconsin- Eau Claire in 2022. As an undergraduate, she joined the Game of Cycles research team due to her interest in graph theory and its applicability in computer science.  \\

\noindent {\bf Abigail Daly} is currently a junior at Providence College where she is pursuing a degree in Computer Science with a minor in Mathematics. She was drawn to the project when hearing a lecture about Game Theory in her discrete math class. She currently works as a Student Field Technician for Providence College, and after graduating hopes to pursue a career in cybersecurity.  \\

\noindent {\bf Xadia M. Goncalves} is a current senior undergraduate Biology major and Spanish minor at Providence College. She is also working on an independent research project that focuses on predator-prey interactions. She is a Western University of Health Sciences Summer Health Professions Education Program (SHPEP) alumna and Resident Peer Assistant where she developed skills through clinical exposure and gained experience in health disparities. She also works closely with the Providence College admissions office as a Diversity Outreach Team member to recruit students from underrepresented backgrounds.  \\

\noindent {\bf Mary Leah Karker} received her Ph.D. in mathematics from Wesleyan University in 2016. She held a visiting position at Connecticut College for two years before joining the faculty at Providence College where she enjoys teaching a variety of undergraduate courses and mentoring students in research. While she loves exploring new areas of mathematics, her primary research interests are in mathematical logic and foundations. \\

\noindent {\bf Alison LaBarre} received her Bachelor of Arts in mathematics with a minor in computer science from Providence College. After graduating she was offered a job at Newgrange Design, where she is in the process of becoming a printed circuit board designer. \\

\noindent {\bf Shanise Walker} is an Assistant Professor of Mathematics at the University of Wisconsin-Eau Claire. She received her Ph.D. from Iowa State University in 2018. Her research interests lie in combinatorics and graph theory. She has supervised several undergraduate research projects. Walker is active in service to the mathematical profession related to equity, diversity, inclusion, and belonging.

\begin{thebibliography}{10}\vspace{-5px}

\bibitem{Alvarado} R. Alvarado, M. Averett, B. Gaines, C. Jackson, M. Karker, M. Marciniak, F. Su, S. Walker. (2021) \textit{The Game of Cycles}, The American Mathematical Monthly, 128:10, 868--887,  DOI:~10.1080/00029890.2021.1979715.
\bibitem{Conway} J.H. Conway. \textit{On Numbers and Games}. AK Peters Series. Taylor \& Francis, 2000.
\bibitem{Siegel} A. Siegel. \textit{Combinatorial Game Theory}, {Graduate Studies in Mathematics, Volume 146, American Mathematical Society}, 2013.
\bibitem{Su} F. Su. \textit{Mathematics for Human Flourishing}. {Yale University Press}, 2020.
\bibitem{Zermelo} Ernst Zermelo. Über eine anwendung der mengenlehre auf die theorie des schachspiels. In \textit{Proceedings of
the fifth international congress of mathematicians}, Volume 2, pages 501–504. Cambridge University Press
Cambridge, UK, 1913.


\end{thebibliography}
\end{document}